\documentclass[secnum,leqno,11pt]{article}
\setlength{\oddsidemargin}{2mm}\setlength{\evensidemargin}{2mm}
\setlength{\topmargin}{-15mm}\setlength{\textwidth}{156mm}
\setlength{\textheight}{245mm}
\makeatletter
\renewcommand{\section}{%
\@startsection{section}%
{1}{\z@}{-3.5ex \@plus -1ex \@minus -.2ex}%
{2.3ex \@plus.2ex}{\normalfont\large\bfseries}%
}\makeatother
\newcommand{\toukou}[1]{\ifx\TOUKOU\undefined\else{#1}\fi}%
\newcommand{\toukoudel}[1]{\ifx\TOUKOU\undefined{#1}\else\fi}%
\newcommand{\toukouchange}[2]{\ifx\TOUKOU\undefined{#1}\else{#2}\fi}%
\usepackage{amssymb}
\usepackage{amsmath}
\usepackage{amsthm}
\usepackage[dvipdfm]{graphicx} 
\usepackage{verbatim,enumerate}
\usepackage[usenames]{color}
 \newtheorem{theorem}{Theorem}[section]
 
 \newtheorem{lemma}[theorem]{Lemma}
 \newtheorem{corollary}[theorem]{Corollary}
\theoremstyle{definition}
 \newtheorem{definition}[theorem]{Definition}
 
 \newtheorem*{acknowledgements}{Acknowledgements}
 
\numberwithin{equation}{section}

\renewcommand{\labelenumi}{\theenumi}
\newcommand{\R}{\boldsymbol{R}}
\newcommand{\C}{\boldsymbol{C}}

\newcommand{\rank}{\operatorname{rank}}

\renewcommand{\phi}{\varphi}
\newcommand{\inner}[2]{\left\langle{#1},{#2}\right\rangle}
\newcommand{\spann}[1]{\left\langle{#1}\right\rangle_{\R}}

\newcommand{\corank}{\operatorname{corank}}
\newcommand{\hess}{\operatorname{Hess}}

\newcommand{\A}{\mathcal{A}}
\newcommand{\E}{\mathcal{E}}
\newcommand{\M}{\mathcal{M}}
\newcommand{\HH}{\mathcal{H}}
\newcommand{\ep}{\varepsilon}
\newcommand{\pmt}[1]{{\begin{pmatrix} #1  \end{pmatrix}}}
\newcommand{\trans}[1]{\vphantom{#1}^t\!#1}

\begin{document}
\toukoudel{
\vspace*{10mm}

\begin{center}
{\large {\bf Criteria for Morin singularities for maps 
into lower dimensions, and applications}}
\\[2mm]
{\today}
\\[2mm]
Kentaro Saji\\
\end{center}
\renewcommand{\thefootnote}{\fnsymbol{footnote}}
\footnote[0]{
2010 Mathematics Subject classification.
Primary 57R45; Secondary 58K60, 58K65.}
\footnote[0]{
Partly supported by 
Japan Society for the Promotion of Science (JSPS)
and 
Coordenadoria de Aperfei\c{c}oamento de Pessoal de N\'ivel Superior
under the Japan-Brazil research cooperative
program and
Grant-in-Aid for Scientific Research (C) No. 26400087,
from JSPS.}
\footnote[0]{Keywords and Phrases. Morin singularities, criteria}
}
\begin{abstract}      
We give criteria for Morin singularities for germs of maps
into lower dimensions.
As an application, we study the bifurcation 
of Lefschetz singularities.
\end{abstract}
\toukou{
\title[Criteria for Morin singularities]
{Criteria for Morin singularities for maps 
into lower dimensions, and applications}
\author[K. Saji]{Kentaro Saji}
\address{Department of Mathematics, 
Graduate School of Science, 
Kobe University, 
Rokko, Nada, Kobe 657-8501, Japan}
\email{sajiO\!\!\!amath.kobe-u.ac.jp}
\thanks{Partly supported by 
Japan Society for the Promotion of Science (JSPS)
and 
Coordenadoria de Aperfei\c{c}oamento de Pessoal de N\'ivel Superior
under the Japan-Brazil research cooperative
program and
Grant-in-Aid for Scientific Research (C) No. 26400087,
from JSPS}
\dedicatory{Dedicated to Professor Mar\'ia del Carmen Romero Fuster 
on the occasion of her sixtieth birthday}
\subjclass[2010]{Primary 57R45; Secondary 58K60, 58K65}
\date{\today}
\maketitle
\bibliographystyle{amsplain}
}
\section{Introduction}
A map-germ
$f:(\R^m,0)\to(\R^n,0)$ $(m>n)$ is called a
{\em $k$-Morin singularity\/}
($1\leq k\leq n$)
if it is $\A$-equivalent to
the following map-germ at the origin:
\begin{equation}\label{eq:morinnor1}
\begin{array}{l}
h_{0,k}(x_1,\ldots,x_{n-1},y_1,\ldots,y_{m-n},z)\\
\hspace{20mm}
\displaystyle
=\bigg(x_1,\ldots,x_{n-1},q(y_1,\ldots,y_{m-n})
+
z^{k+1}+\sum_{i=1}^{k-1}x_iz^i\bigg)
\end{array}
\end{equation}
if $k\geq2$, and 
\toukouchange{
$
h_{0,1}(x_1,\ldots,x_{n-1},y_1,\ldots,y_{m-n+1})
=\big(x_1,\ldots,x_{n-1},q(y_1,\ldots,y_{m-n+1})\big)
$}{
$
h_{0,1}(x_1,\ldots,x_{n-1},y_1,\ldots,y_{m-n+1})
{=}\big(x_1,\ldots,x_{n-1},q(y_1,\ldots,y_{m-n+1})\big)
$}
if $k=1$,
where $q$ is a non-degenerate quadratic germ of function.
The $1$-Morin singularity is also called the {\em fold},
and
the $2$-Morin singularity is also called the {\em cusp}.
We say that two map-germs 
$f,g:(\R^m,0)\to(\R^n,0)$ are {\em $\A$-equivalent\/}
if there exist germs of
diffeomorphism
$\phi:(\R^m,0)\to(\R^m,0)$  and \
$\Phi:(\R^n,0)\to(\R^n,0)$ such  that
$\Phi\circ f\circ \phi=g$.
Morin singularities are stable, and conversely, all corank one and stable
map-germs are Morin singularities. 
This means that Morin singularities are fundamental and
frequently appear as singularities of
maps from one manifold to another.
If $\corank df_0=1$, then
one can choose a coordinate system $(x,y)$
such that $f(x,y)=
\big(x_1,\ldots,x_{n-1},h(x,y)\big)$,
where 
$x=(x_1,\ldots,x_{n-1})$,
$y=(y_1,\ldots,y_{m-n+1})$.
We call this procedure a {\em normalization}.
Morin \cite{morin} gave a characterization of those singularities
in terms of
transversality of the jet extensions to 
the Thom-Boardman singularity set,
and also gave criteria for germs with respect to
a normalized form
$\big(x_1,\ldots,x_{n-1},h(x,y)\big)$.
Morin singularities are also characterized using
the intrinsic derivative due to 
Porteous (\cite{porteous} see also \cite{ando,GG}).
Criteria for singularities without using
normalization are not only more convenient 
but also indispensable in some cases.
We refer to
criteria which are independent of normalization 
as {\em general criteria}.
In fact, in the case of wave front
surfaces in 3-space, general criteria for 
cuspidal edges and swallowtails were given
in \cite{krsuy},
where we used them to study 
the local and global behavior of flat fronts 
in hyperbolic 3-space.
Recently general criteria for other singularities and 
several of their applications have been given
in \cite{ishimachi,ist,kruy,nishi,jgea,suyak,syy}.
In this paper, we give general criteria for Morin singularities.
Using them, we give applications to bifurcation of the
Lefschetz singularity which plays important roles
in low-dimensional topology.
See \cite{df,fukuda,iikt,saeki,saekisaku,sajihigh,sajiiso,sst,szu} for other 
investigations of Morin singularities.

\section{Singular sets and Hesse matrix of corank one singularities}
\begin{definition}
Let
$f:(\R^m,0)\to(\R^n,0)$
be a map-germ and denote by $S(f)$ the singular locus of $f$.
A collection of vector fields 
\begin{equation}\label{eq:collecvf}
(\xi,\eta)=(\xi_1,\ldots,\xi_{n-1},\eta_1,\ldots,\eta_{m-n+1})
\end{equation}
on $(\R^m,0)$ 
is said to be {\em adapted with respect to}\/ $f$ if
$\xi_1,\ldots,\xi_{n-1},\eta_1,\ldots,\eta_{m-n+1}$ generates
$T_0\R^m$ at $0$, and
$\spann{\eta_1(p),\ldots,\eta_{m-n+1}(p)}=\ker df_p$
for any $p\in S(f)$ near $0$.
\end{definition}
\begin{lemma}\label{adaptedexist}
Let\/ $f:(\R^m,0)\to(\R^n,0)$ be a map-germ satisfying\/
$\rank df_0=n-1$.
Then there exists 
a collection of
vector fields\/ 
$(\xi,\eta)$ as in\/ \eqref{eq:collecvf}
which is adapted with respect to\/ $f$.
\end{lemma}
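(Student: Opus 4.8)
The plan is to construct the $\eta_j$ first so that they span $\ker df_p$ along the singular set, and then complete to a frame of $T_0\R^m$ by adding the $\xi_i$. Since $\rank df_0 = n-1$, after a linear change of coordinates in the source and target we may assume that the differential $df_0$ maps $\partial/\partial x_1,\ldots,\partial/\partial x_{n-1}$ isomorphically onto $T_0\R^{n-1}\times\{0\}\subset T_0\R^n$ and kills $\partial/\partial y_1,\ldots,\partial/\partial y_{m-n+1}$; here I am writing coordinates $(x,y)$ on $(\R^m,0)$ as in the normalization discussed in the introduction. By continuity, the $(m-n)\times(n)$ ... more precisely: the restriction of $df_p$ to $\spann{\partial/\partial x_1,\ldots,\partial/\partial x_{n-1}}$ remains injective for $p$ near $0$, so $\ker df_p$ is always a complement to that subspace and is the graph of a linear map $A(p)\colon \R^{m-n+1}\to\R^{n-1}$ depending smoothly on $p$.

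Concretely, write $f = (f_1,\ldots,f_n)$. The matrix $df_p$ has the block form $\bigl(\,B(p)\ \big|\ C(p)\,\bigr)$, where $B(p)$ is the $n\times(n-1)$ matrix of $x$-derivatives and $C(p)$ is the $n\times(m-n+1)$ matrix of $y$-derivatives; $B(0)$ has rank $n-1$, hence (shrinking the neighborhood) $B(p)$ has rank $n-1$ for all $p$ near $0$. A vector $(a,b)\in\R^{n-1}\oplus\R^{m-n+1}$ lies in $\ker df_p$ iff $B(p)a + C(p)b = 0$. Since $B(p)$ has a smooth left inverse $B(p)^{+}$ on its image (e.g. by selecting $n-1$ independent rows, which can be done locally), for each $b$ there is at most one such $a$, and it exists precisely when $C(p)b$ lies in $\image B(p)$ — which is automatic when $p\in S(f)$, because there $\rank df_p = n-1 = \rank B(p)$, forcing $\image C(p)\subset \image B(p)$. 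Set $A(p) := -B(p)^{+}C(p)$, a smooth $(n-1)\times(m-n+1)$ matrix-valued germ, and define
\[
\eta_j(p) \ :=\ \frac{\partial}{\partial y_j}\Big|_p \ +\ \sum_{i=1}^{n-1} A(p)_{ij}\,\frac{\partial}{\partial x_i}\Big|_p,
\qquad j = 1,\ldots,m-n+1.
\]
Then $\eta_1,\ldots,\eta_{m-n+1}$ are smooth, pointwise linearly independent (their $y$-components are the standard basis), and for $p\in S(f)$ near $0$ they lie in $\ker df_p$; since $\dim\ker df_p = m-n+1$ there, they span it. Finally take $\xi_i := \partial/\partial x_i$ for $i=1,\ldots,n-1$. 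At every point near $0$, $\{\xi_i,\eta_j\}$ is obtained from the coordinate frame $\{\partial/\partial x_i,\partial/\partial y_j\}$ by a unipotent (block lower-triangular with identity diagonal) change of basis, so it spans $T_p\R^m$; in particular it spans $T_0\R^m$. Hence $(\xi,\eta)$ is adapted with respect to $f$.

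The only delicate point is the smooth choice of the left inverse $B(p)^{+}$: a priori the set of $n-1$ rows of $B(p)$ that are independent may vary with $p$. This is handled by working on a small enough neighborhood — the rows independent at $0$ stay independent nearby — or, more invariantly, by taking $B(p)^{+} = (B(p)^{t}B(p))^{-1}B(p)^{t}$, which is smooth wherever $B(p)$ has full column rank $n-1$, i.e. on all of a neighborhood of $0$. With that, every step above is elementary linear algebra done smoothly in $p$, and no transversality or normal-form input beyond the corank-one hypothesis is needed.
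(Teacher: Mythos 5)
Your argument is correct, and it reaches the conclusion by a genuinely different route from the paper's. The paper's proof is a two-line appeal to the normalization described in the introduction: after a (nonlinear) source coordinate change one may write $f(x,y)=(x,h(x,y))$ with $dh_0=0$, whereupon $S(f)=\{h_{y_1}=\cdots=h_{y_{m-n+1}}=0\}$ and the coordinate fields $\partial_{x_i},\partial_{y_j}$ are already adapted. You instead stop at a \emph{linear} change of coordinates and compensate by tilting the vector fields: writing $df_p=\bigl(B(p)\mid C(p)\bigr)$, you observe that $\rank B(p)=n-1$ persists near $0$, so that on $S(f)$ one has $\rank df_p=n-1$, hence $\image C(p)\subset\image B(p)$, and $\ker df_p$ is the graph of $A(p)=-B(p)^{+}C(p)$ over the $y$-coordinate plane; the fields $\eta_j=\partial_{y_j}+\sum_i A(p)_{ij}\partial_{x_i}$ then work, and the unipotent change of basis guarantees the frame condition. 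All the steps check out, including the one delicate point you flag: the pseudo-inverse $(B^tB)^{-1}B^t$ is indeed smooth wherever $B$ has full column rank, so $A(p)$ is a smooth germ and the $\eta_j$ are honest vector fields on a neighborhood of $0$, not just along $S(f)$. The trade-off is the usual one: the paper's proof is shorter because it delegates the work to the normalization (itself an implicit-function-theorem statement), while yours is self-contained, purely linear-algebraic, and yields an adapted collection in essentially the original coordinates, which can be convenient when explicit formulas for $(\xi,\eta)$ are wanted --- as in the Lefschetz computation of Section 5, where the author in fact writes down such tilted $\eta_i$ directly.
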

\begin{proof} 
\toukouchange
{
Since the result does not depend on the choice of coordinate system
and $\rank df_0=n-1$, then
we can take 
a coordinate system
}
{
\hfill Since\hfill  the\hfill  result\hfill  does\hfill  
not\hfill  depend\hfill on\hfill the\hfill choice\hfill 
of\hfill coordinate\\
system\hfill 
and\hfill $\rank df_0=n-1$,\hfill then\hfill
we\hfill can\hfill take\hfill 
a\hfill coordinate\hfill system}
$(x,y)=(x_1,\ldots,x_{n-1},y_1,\ldots,y_{m-n+1})$
in a neighborhood of the origin $U$
on the source space, such that
\begin{equation}\label{eq:normalcoord}
f(x,y)
=
(x,h(x,y)),\quad dh_0=0.
\end{equation}
\toukouchange{
Then \hfill
$S(f)=\{(x,y)\in U\,|\,h_{y_1}(x,y)=\cdots=h_{y_{m-n+1}}(x,y)=0\}$\hfill 
holds. \hfill
Thus\hfill $\partial x_1,\ldots,\partial x_{n-1},$\\
$\partial y_1,\ldots,\partial y_{m-n+1}$
are the desired vector fields.}
{Then $S(f)=\{(x,y)\in U\,|\,h_{y_1}(x,y)=\cdots=h_{y_{m-n+1}}(x,y)=0\}$ holds.
Thus $\partial x_1,\ldots,\partial x_{n-1},$$
\partial y_1,\ldots,\partial y_{m-n+1}$
are the desired vector fields.}
\end{proof}
Let
$f:(\R^m,0)\to(\R^n,0)$
be a map-germ satisfying $\rank df_0=n-1$,
and
$(\xi,\eta)=(\xi_1,\ldots,\xi_{n-1},\eta_1,\ldots,\eta_{m-n+1})$
an adapted collection of vector fields with respect to $f$.
Set
$$
\lambda_i=\det(\xi_1f,\ldots,\xi_{n-1}f,\eta_if),\quad
i=1,\ldots,m-n+1
$$
and
$$
\Lambda=(\lambda_1,\ldots,\lambda_{m-n+1}),$$
where $\zeta f$ stands for the directional derivative of
$f$ along the vector field $\zeta$.
Then 
$S(f)=\{\Lambda=0\}$.
\begin{definition}
Let $0$ be a singular point of $f=(f_1,\ldots,f_n):(\R^m,0)\to(\R^n,0)$
satisfying $\rank df_0=n-1$.
We say that $0$ is {\em non-degenerate\/} if
$
\rank d\Lambda_0=m-n+1
$.
\end{definition}
This condition is a special case of the condition called
{\em critical normalization}.
See \cite{dgw} for details.
\begin{lemma}\label{lem:nondegnotdep}
The non-degeneracy condition above does not depend on the choice of
coordinate systems on the source space nor
on the target space.
\end{lemma}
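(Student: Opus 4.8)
My plan is to trace how $\Lambda=(\lambda_1,\dots,\lambda_{m-n+1})$, and in particular its differential $d\Lambda_0$ at the singular point, transforms under each ingredient entering its construction: a diffeomorphism of the target, a diffeomorphism of the source, and a change of the adapted collection $(\xi,\eta)$. The last has to be included because a source coordinate change in general moves the adapted vector fields; conversely, once source changes are controlled together with changes of $(\xi,\eta)$, and target changes are controlled, the lemma follows. The single feature exploited in every case is that $0\in S(f)=\{\Lambda=0\}$, so $\Lambda(0)=0$: hence a transformation rule of the shape $\widetilde\Lambda=B\,\Lambda+(\text{error})$, with $B$ a matrix of germs, collapses at $0$ to $d\widetilde\Lambda_0=B(0)\,d\Lambda_0$ as soon as the error has vanishing differential at $0$, and it then remains only to check that $B(0)$ is invertible.

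For the target, let $\Phi\colon(\R^n,0)\to(\R^n,0)$ be a diffeomorphism. Then $S(\Phi\circ f)=S(f)$ and $\ker d(\Phi\circ f)_p=\ker df_p$, so $(\xi,\eta)$ stays adapted, and $\zeta(\Phi\circ f)=d\Phi_{f}(\zeta f)$ gives $\widetilde\lambda_i=\det(d\Phi_{f})\,\lambda_i$, i.e. $\widetilde\Lambda=u\Lambda$ with $u(0)=\det d\Phi_0\ne0$; hence $d\widetilde\Lambda_0=u(0)\,d\Lambda_0$. For the source, let $\phi\colon(\R^m,0)\to(\R^m,0)$ be a diffeomorphism and $\widetilde f=f\circ\phi$; the pulled-back fields $\widehat\xi_i=(d\phi)^{-1}(\xi_i\circ\phi)$ and $\widehat\eta_i=(d\phi)^{-1}(\eta_i\circ\phi)$ are adapted with respect to $\widetilde f$ (they frame $T_0\R^m$, and $\spann{\widehat\eta_i}=\ker d\widetilde f$ along $S(\widetilde f)=\phi^{-1}(S(f))$), and the chain rule yields $\widehat\xi_i\widetilde f=(\xi_i f)\circ\phi$, $\widehat\eta_i\widetilde f=(\eta_i f)\circ\phi$, so $\widehat\Lambda=\Lambda\circ\phi$ and $d\widehat\Lambda_0=d\Lambda_0\,d\phi_0$. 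In both cases the rank at $0$ is unchanged.

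The substantive step, and I expect the only real obstacle, is independence of the adapted collection for fixed coordinates; combined with the source step applied to $f\circ\phi$ it finishes the proof. Given two adapted collections, I write $\widetilde\xi_i=\sum_jP_{ij}\xi_j+\sum_kQ_{ik}\eta_k$ and $\widetilde\eta_i=\sum_jR_{ij}\xi_j+\sum_kT_{ik}\eta_k$ near $0$, with $\pmt{P&Q\\R&T}$ invertible near $0$. Two remarks: each $\eta_kf$ and each $\widetilde\eta_if$ vanishes on $S(f)$ since the relevant vector lies in $\ker df_p$ there; and along $S(f)$ near $0$ the vectors $\xi_1f,\dots,\xi_{n-1}f$ are linearly independent, spanning $\image df_p$, which has dimension $n-1$ there by the rank hypothesis. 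Applying $df_p$ to the second relation at $p\in S(f)$ forces every $R_{ij}$ to vanish on $S(f)$ and exhibits $(T_{ik}(p))$ as a change-of-basis matrix of $\ker df_p$, so $(T_{ik}(0))$ is invertible; applying $df_0$ to the first relation at $0$ shows $(P_{ij}(0))$ is invertible. Expanding $\widetilde\lambda_i=\det(\widetilde\xi_1f,\dots,\widetilde\xi_{n-1}f,\widetilde\eta_if)$ multilinearly in the $\xi_lf$ and $\eta_kf$, the terms using all of $\xi_1f,\dots,\xi_{n-1}f$ with the last column contributing its $\eta$-part add up to $\det(P)\sum_kT_{ik}\lambda_k$, while every other surviving term carries two factors each vanishing at $0$ --- a coefficient $R_{ij}$ together with a component of some $\eta_kf$, or two components of $\eta_kf$'s --- hence has vanishing differential at $0$. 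Since $\lambda_k(0)=0$, differentiation yields $d\widetilde\Lambda_0=\det(P)(0)\,(T_{ik}(0))\,d\Lambda_0$, and the prefactor $\det(P)(0)(T_{ik}(0))$ is invertible, so $\rank d\widetilde\Lambda_0=\rank d\Lambda_0$. The delicate point is precisely this last expansion --- organizing the multilinear bookkeeping so that all but the single leading term is manifestly second order at the singular point.
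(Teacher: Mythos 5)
Your proof is correct, and it follows the same overall decomposition as the paper's: target diffeomorphisms multiply $\Lambda$ by $\det(d\Phi_{f})$, and a change of adapted collection acts through a block transition matrix whose lower-left block vanishes on $S(f)$. The difference lies in how the key step is computed. The paper first uses target-independence to normalize $d(f_n)_0=0$, which lets it write $\zeta\lambda_i(0)=\big(\Delta\,\zeta\eta_i f_n\big)(0)$ and read off $d\overline\Lambda_0=(\det A^1)(0)\,B^2(0)\,d\Lambda_0$ almost immediately; you avoid any normalization and instead expand $\det(\widetilde\xi_1 f,\dots,\widetilde\xi_{n-1}f,\widetilde\eta_i f)$ multilinearly, isolating the leading term $\det(P)\sum_k T_{ik}\lambda_k$ and checking that every remaining term carries two factors vanishing at $0$ (an entry of $R$, or a component of some $\eta_k f$), hence has vanishing differential there. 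Your bookkeeping is sound --- in particular the observation that a term whose last column draws from the $\xi$-part must, to be nonzero, also contain an $\eta_k f$ column, which supplies the second vanishing factor. Two further points in your favour: you derive, rather than posit, that the transition matrix has $R=0$ on $S(f)$ with $P(0)$ and $T(0)$ invertible, and you explicitly treat source diffeomorphisms by pulling back the vector fields, a case the paper leaves implicit (correctly so, since the directional derivatives defining $\lambda_i$ are coordinate-free, but your explicit check costs little). The trade-off is that the paper's normalization reduces the computation to a one-line identity at the price of a logical dependency on the target step, whereas your expansion is self-contained but requires the more delicate multilinear accounting you yourself flag.
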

\begin{proof}
One can easily show that it does not depend on the coordinate
system on the target.
In fact, 
let $\Phi:(\R^n,0)\to(\R^n,0)$ be a germ of diffeomorphism,
and we regard $d\Phi_x$ as the matrix representation of $d\Phi_x$
with respect to the standard basis at $x\in\R^n$.
Set 
$\overline{\lambda}_i=\det\big(\xi_1(\Phi\circ f),\ldots,
\xi_{n-1}(\Phi\circ f),\eta_i(\Phi\circ f)\big)$,
and 
$\overline{\Lambda}=\big(
\overline{\lambda}_1,\ldots,\overline{\lambda}_{m-n+1}\big).$
Then $\overline\Lambda(x)=\det(d\Phi_{f(x)})\Lambda(x)$ holds.
Thus $\rank d\Lambda_0$ does not depend on the choice of the coordinate system
on the target.

Secondly, we show that it does not depend on the choice of 
an adapted collection of vector fields.
Since it does not depend on the coordinate
system on the target,
we may assume that $f=(f_1,\ldots,f_n)$ satisfies
$d(f_n)_0=0$.
Then for any vector field $\zeta$,
it holds that
$\zeta\lambda_i=\det(\xi_1f,\ldots,\xi_{n-1}f,\zeta\eta_i f)(0)
=\big(\Delta\zeta\eta_if_n\big)(0)$,
where $\Delta=\det(\xi_1\hat{f},\ldots,\xi_{n-1}\hat{f})$,
and $\hat{f}=(f_1,\ldots,f_{n-1})$.
Let 
$
(\overline{\xi}_1,\ldots,\overline{\xi}_{n-1},
\overline{\eta}_1,\ldots,\overline{\eta}_{m-n+1})
$
be an adapted collection of vector fields satisfying
\begin{equation}\label{eq:vfchangeeta}
\begin{array}{l}
\pmt{\bar\xi_1\\ \vdots\\ \bar\xi_{n-1}\\
\hline
\bar\eta_1\\ \vdots\\ \bar\eta_{m-n+1}}
=
\left(
\begin{array}{c|c}
A^1&A^2\\
\hline
B^1&B^2
\end{array}
\right)
\pmt{\xi_1\\ \vdots\\ \xi_{n-1}\\
\hline
\eta_1\\ \vdots\\ \eta_{m-n+1}},
\quad\text{where}\quad
\left(
\begin{array}{c|c}
A^1&A^2\\
\hline
B^1&B^2
\end{array}
\right)\\[14mm]
\hfill
=
\left(
\begin{array}{ccc|ccc}
a^1_{1,1}&\cdots&a^1_{1,n-1}&a^2_{1,1}&\cdots&a^2_{1,m-n+1}\\
\vdots&\ddots&\vdots&\vdots&\ddots&\vdots\\
a^1_{n-1,1}&\cdots&a^1_{n-1,n-1}&a^2_{n-1,1}&\cdots&a^2_{n-1,m-n+1}\\
\hline
b^1_{1,1}&\cdots&b^1_{1,n-1}&b^2_{1,1}&\cdots&b^2_{1,m-n+1}\\
\vdots&\ddots&\vdots&\vdots&\ddots&\vdots\\
b^1_{m-n+1,1}&\cdots&b^1_{m-n+1,n-1}&
b^2_{m-n+11}&\cdots&b^2_{m-n+1,m-n+1}
\end{array}
\right),
\end{array}
\end{equation}
where $A^1,B^2$ are regular matrices at $0$, and $B^1=O$ on $S(f)$.
Set 
\begin{equation}\label{eq:lambdabar}
\bar\lambda_i=
\det\big(\bar\xi_1f,\ldots,\bar\xi_{n-1}f,\bar\eta_if\big)\quad
(i=1,\ldots,m-n+1),\quad
\bar\Lambda
=\big(\bar\lambda_1,\ldots,\bar\lambda_{m-n+1}\big).
\end{equation}
Then for any vector field $\zeta$, we see that
$$
\zeta\overline{\lambda}_i(0)
=
\big(\det A^1\,\Delta\, \zeta\bar\eta_i f_n\big)(0),$$
and
$
d\overline\Lambda_0=\big((\det A^1\Delta)B^2d\Lambda\big)(0).
$
Thus we have the conclusion.
\end{proof}
For a non-degenerate singularity $0$,
we define a matrix $\HH_\eta$ by
\begin{equation}\label{eq:hheta}
\HH_\eta=
\pmt{\eta_j\lambda_i}_{1\leq i,j\leq m-n+1}
=
\pmt{
\eta_1\lambda_1&\ldots&\eta_1\lambda_{m-n+1}\\
\vdots&\ddots&\vdots\\
\eta_{m-n+1}\lambda_1&\ldots&\eta_{m-n+1}\lambda_{m-n+1}}.
\end{equation}
Then $\HH_\eta$ is symmetric on $S(f)$.
In fact,
since $[\eta_j,\eta_i](p)\in T_p\R^m$, 
there exist functions $\alpha_i$ $(i=1,\ldots,n-1)$ and
$\beta_j$ $(j=1,\ldots,m-n+1)$ 
such that
\begin{equation}\label{eq:symm1}
[\eta_j,\eta_i](p)
=
\sum_{i=1}^{n-1}\alpha_i\xi_i(p)
+
\sum_{j=1}^{m-n+1}\beta_j\eta_j(p).
\end{equation}
If
$p\in S(f)$, then by $\eta_j f(p)=0$ and \eqref{eq:symm1}
it follows that
$$
\eta_j\lambda_i=
\det(\xi_1f,\ldots,\xi_{n-1}f,\eta_j\eta_if)
=
\det(\xi_1f,\ldots,\xi_{n-1}f,\eta_i\eta_jf)
=
\eta_i\lambda_j
$$
on $S(f)$.
\begin{lemma}\label{lem:rankhf}
Let\/ $0$ be a non-degenerate singular point of\/
$f=(f_1,\ldots,f_n):(\R^m,0)\to(\R^n,0)$.
The matrix-valued function\/ $\HH_\eta$ on\/ $S(f)$
does not depend on the choice of 
an adapted collection of vector fields
with respect to\/ $f$,
nor on the coordinate systems on the target
up to non-zero functional multiplications.
In particular, $\rank \HH_\eta$ on\/ $S(f)$
does not depend on the choice of 
adapted collections of vector fields with respect to\/ $f$
nor on the coordinate systems on the target.
\end{lemma}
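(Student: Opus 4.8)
The plan is to establish two explicit transformation rules valid \emph{on} $S(f)$ --- one for a change of the adapted collection of vector fields, one for a change of coordinates on the target --- and then to observe that each rule multiplies $\HH_\eta$ by a function that is nonvanishing near $0$ and conjugates it by an invertible matrix-valued function, so in particular preserves the rank. Unlike in the proof of Lemma~\ref{lem:nondegnotdep}, where only the value at $0$ was needed, the computation here has to be carried out at \emph{every} point of $S(f)$; I will therefore not pre-normalize to $d(f_n)_0=0$ but work directly with the defining determinants, using the two facts already recorded above: on $S(f)$ one has $\eta_j f=0$, and consequently $\eta_j\lambda_i=\det(\xi_1f,\dots,\xi_{n-1}f,\eta_j\eta_if)$ there.

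The target change is the easy half. Let $\Phi$ be a germ of diffeomorphism of $(\R^n,0)$ and keep $(\xi,\eta)$ fixed; as computed in the proof of Lemma~\ref{lem:nondegnotdep}, $\overline\Lambda=\det(d\Phi_f)\,\Lambda$. Differentiating along $\eta_j$ and discarding the term $(\eta_j\det(d\Phi_f))\,\lambda_i$, which vanishes on $S(f)$ since $\lambda_i=0$ there, gives $\overline{\HH}_\eta=\det(d\Phi_f)\,\HH_\eta$ on $S(f)$, with $\det(d\Phi_f)$ nonzero near $0$.

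For a change of the adapted collection, take $(\bar\xi,\bar\eta)$ as in \eqref{eq:vfchangeeta}, with $A^1,B^2$ regular at $0$ and $B^1=O$ on $S(f)$. Working on $S(f)$ throughout: (i) since $\bar\eta_if=0$ there, only the last-column derivative survives in $\eta_c\bar\lambda_i$, so $\eta_c\bar\lambda_i=\det(\bar\xi_1f,\dots,\bar\xi_{n-1}f,\eta_c\bar\eta_if)$; (ii) since $\bar\xi_af=\sum_b a^1_{a,b}\xi_bf$ on $S(f)$ (the $\eta$-parts die because $\eta_cf=0$), multilinearity and antisymmetry of the determinant turn this into $\det(A^1)\,\det(\xi_1f,\dots,\xi_{n-1}f,\eta_c\bar\eta_if)$; (iii) writing $\bar\eta_i=\sum_{c'}b^2_{i,c'}\eta_{c'}+\sum_b b^1_{i,b}\xi_b$ and differentiating along $\eta_c$, the pieces containing a factor $\eta_{c'}f$ or a factor $b^1_{i,b}$ vanish on $S(f)$, while the remaining pieces $(\eta_cb^1_{i,b})\,\xi_bf$ --- genuinely present, since $\eta_c$ need not be tangent to $S(f)$ --- already lie in $\spann{\xi_1f,\dots,\xi_{n-1}f}$ and so are annihilated by $\det(\xi_1f,\dots,\xi_{n-1}f,\,\cdot\,)$; this leaves $\eta_c\bar\lambda_i=\det(A^1)\sum_{c'}b^2_{i,c'}\det(\xi_1f,\dots,\xi_{n-1}f,\eta_c\eta_{c'}f)=\det(A^1)\sum_{c'}b^2_{i,c'}\,\eta_c\lambda_{c'}$. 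Since moreover $\bar\eta_j\bar\lambda_i=\sum_c b^2_{j,c}\,\eta_c\bar\lambda_i$ on $S(f)$ (again because $B^1=O$ there), combining these yields $\overline{\HH}_\eta=\det(A^1)\,B^2\,\HH_\eta\,\trans{(B^2)}$ on $S(f)$. Composing the two rules, any admissible change replaces $\HH_\eta$ by $\theta\,B\,\HH_\eta\,\trans{B}$ with $\theta$ nonvanishing near $0$ and $B$ invertible, whence $\rank\HH_\eta$ on $S(f)$ is unchanged and therefore well defined.

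I expect step~(iii) to be the only real obstacle: $B^1$ vanishes only along $S(f)$, so its directional derivatives along $\eta_c$ need not vanish, and $\eta_c$ is in general transverse to $S(f)$; the argument must hence rely on the determinant killing any last column that lies in $\spann{\xi_1f,\dots,\xi_{n-1}f}$ rather than on those terms being zero pointwise. Everything else is bookkeeping with multilinearity of the determinant together with the relations already collected above.
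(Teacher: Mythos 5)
Your proposal is correct and follows essentially the same route as the paper: decompose the two adapted collections via \eqref{eq:vfchangeeta}, use $\eta f=0$ and $B^1=O$ on $S(f)$ to reduce everything to the columns $\eta_c\eta_{c'}f$, and observe that the derivatives of the $B^1$-coefficients only contribute vectors in $\spann{\xi_1f,\ldots,\xi_{n-1}f}$, which the determinant annihilates. Your final transformation rule, the congruence $\HH_{\bar\eta}=\det(A^1)\,B^2\,\HH_\eta\,\trans{(B^2)}$ on $S(f)$, is in fact the correct consequence of this computation (the paper's displayed formula \eqref{eq:hfkankei} states it more loosely), and it yields the rank invariance exactly as you argue.
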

\begin{proof}
Let  
$(\xi_1,\ldots,\xi_{n-1},\eta_1,\ldots,\eta_{m-n+1})$ 
and
$(\bar\xi_1,\ldots,\bar\xi_{n-1},
\bar\eta_1,\ldots,\bar\eta_{m-n+1})$
be adapted collections of vector fields with respect to $f$
satisfying \eqref{eq:vfchangeeta}.
By the conditions, it holds that 
$B^1=0$, $\det A^1\ne0$ and $\det B^2\ne0$ on $S(f)$.
Set
$$
\bar\lambda_i=
\det(\bar\xi_1f,\ldots,\bar\xi_{n-1}f,\bar\eta_if),\quad
\HH_{\bar\eta}=
\pmt{\bar\eta_j\bar\lambda_i}_{1\leq i,j\leq m-n+1}.
$$
Since
$\eta_i$ and $\bar\eta_i$ $(i=1,\ldots,m-n+1)$ 
are included in $\ker df$ on $S(f)$,
one can see that:
$$
\begin{array}{rcl}
\displaystyle
\bar\eta_j\bar\lambda_i
&=&
\displaystyle
\bar\eta_j
\det(\bar\xi_1f,\ldots,\bar\xi_{n-1}f,\bar\eta_if)\\
&=&
\displaystyle
\inner{\bar\xi_1f\times\cdots\times\bar\xi_{n-1}f}
{\bar\eta_j\bar\eta_if}\\
&=&
\displaystyle
\det A^1 \inner{\xi_1f\times\cdots\times\xi_{n-1}f}
{\bar\eta_j\bar\eta_if}\\
&=&
\displaystyle
\det A^1 \inner{\xi_1f\times\cdots\times\xi_{n-1}f}
{\sum_{k,l}\eta_l b^1_{i,k}\,\xi_l f
+
\sum_{k,l}b^2_{jl}b^2_{ik}\eta_l\eta_kf}\\
&=&
\displaystyle
\det A^1 
\sum_{k,l}b^2_{jl}b^2_{ik}
\inner{\xi_1f\times\cdots\times\xi_{n-1}f}
{\eta_l\eta_kf}\\
&=&
\displaystyle
\det A^1 
\sum_{k,l}b^2_{jl}b^2_{ik}
\,\eta_l\lambda_k.
\end{array}
$$
Thus on $S(f)$, we have that 
\begin{equation}\label{eq:hfkankei}
\HH_{\bar\eta}
=
(\det A^1)^{m-n+1}(\det B^2) \HH_\eta.
\end{equation}
This proves the first assertion.
One can show the independence 
for the target coordinate systems easily
by following the same method as used in 
the proof of Lemma \ref{lem:nondegnotdep}.
\end{proof}
If $0$ is a non-degenerate singularity, then
$S(f)$ is a manifold.
Thus we can consider $g=f|_{S(f)}$.
Then we have the following lemma.
\begin{lemma}\label{lem:s2}
Let\/ $0$ be a non-degenerate singular point of\/
$f=(f_1,\ldots,f_n)$.
Then
$$
S(f|_S)=S(g)=
\{p\in S(f)\,|\,\det \HH_\eta(p)=0\}$$
near\/ $0$. Moreover, 
by the identification 
\begin{equation}\label{eq:hh}
\HH_\eta:\sum_{i=1}^{m-n+1}a_i\eta_i
\mapsto 
\sum_{i=1}^{m-n+1}\Bigg(\sum_{j=1}^{m-n+1}a_j\eta_i\lambda_j\Bigg)\eta_i,
\end{equation}
it holds that\/
$\ker dg_p=\ker \HH_\eta(p)=\ker df_p\cap T_pS(f)$.
\end{lemma}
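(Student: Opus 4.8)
The plan is to reduce everything to the normalized model and then read off the assertions from explicit formulas. By Lemma~\ref{lem:nondegnotdep} non-degeneracy of the singular point is coordinate-free, and by Lemma~\ref{lem:rankhf} the matrix $\HH_\eta$ --- hence, via the identification \eqref{eq:hh}, its kernel as an endomorphism of $\ker df_p=\spann{\eta_1,\ldots,\eta_{m-n+1}}$, and its zero-determinant locus --- is independent of the choice of adapted collection $(\xi,\eta)$ and of the target coordinates; the objects $S(g)$, $\ker dg_p$, $\ker df_p$ and $T_pS(f)$ are obviously coordinate-free as well. So I may assume $f(x,y)=(x,h(x,y))$ with $dh_0=0$ as in \eqref{eq:normalcoord}, and take the adapted vector fields $\xi_i=\partial x_i$, $\eta_j=\partial y_j$ from the proof of Lemma~\ref{adaptedexist}.

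In this model a direct determinant computation gives $\lambda_i=\det(\partial x_1f,\ldots,\partial x_{n-1}f,\partial y_if)=h_{y_i}$, so $\Lambda=(h_{y_1},\ldots,h_{y_{m-n+1}})$, the set $S(f)$ is cut out by $h_{y_1}=\cdots=h_{y_{m-n+1}}=0$, and $\HH_\eta=(\partial y_j\lambda_i)_{1\leq i,j\leq m-n+1}=(h_{y_iy_j})$ is the partial Hessian of $h$ in the $y$-variables (automatically symmetric here, not only on $S(f)$). Non-degeneracy says exactly that $d\Lambda_0$, whose columns are $\partial\Lambda/\partial x_k$ and $\partial\Lambda/\partial y_j$, has maximal rank $m-n+1$; hence $\Lambda$ is a submersion near $0$, the set $S(f)$ is a submanifold of dimension $n-1$ there, and $T_pS(f)=\ker d\Lambda_p$ for $p\in S(f)$ near $0$.

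Next I compute the two derivatives along $S(f)$. Since $h_{y_1}=\cdots=h_{y_{m-n+1}}=0$ there, $df_p$ annihilates every $\partial y_j$, so $\ker df_p=\spann{\partial y_1,\ldots,\partial y_{m-n+1}}=\spann{\eta_1,\ldots,\eta_{m-n+1}}$, consistent with adaptedness. As $g=f|_{S(f)}$ factors as $f$ composed with the inclusion $S(f)\hookrightarrow\R^m$, we get $dg_p=df_p|_{T_pS(f)}$ and hence $\ker dg_p=\ker df_p\cap T_pS(f)$. Writing a tangent vector of $T_pS(f)=\ker d\Lambda_p$ as $(u,v)\in\R^{n-1}\times\R^{m-n+1}$, membership in $\ker df_p$ forces $u=0$, and the remaining equations $d\Lambda_p(0,v)=0$ read $\sum_j h_{y_iy_j}(p)v_j=0$ for all $i$; by \eqref{eq:hh} this is precisely the condition $\sum_j v_j\eta_j\in\ker\HH_\eta(p)$. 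Therefore $\ker dg_p=\ker df_p\cap T_pS(f)=\ker\HH_\eta(p)$, which is the ``moreover'' part.

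Finally, $g$ maps the $(n-1)$-dimensional manifold $S(f)$ into $\R^n$, so $p\in S(f)$ is a singular point of $g$ precisely when $dg_p$ fails to be injective, i.e.\ when $\ker dg_p\ne\{0\}$; by the previous step this is the same as $\ker\HH_\eta(p)\ne\{0\}$, i.e.\ $\det\HH_\eta(p)=0$ (recall $\HH_\eta(p)$ is a square matrix, symmetric on $S(f)$). This yields $S(f|_S)=S(g)=\{p\in S(f)\,|\,\det\HH_\eta(p)=0\}$ near $0$. The one delicate point is the reduction of the first paragraph: one must check that every quantity in the statement is either genuinely coordinate-free or --- in the case of $\HH_\eta$ --- well defined up to the nonzero-function scaling supplied by Lemma~\ref{lem:rankhf}, so that the computation may legitimately be carried out in the normalized model. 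Once that is granted the rest is bookkeeping with the explicit formulas $\lambda_i=h_{y_i}$ and $\HH_\eta=(h_{y_iy_j})$.
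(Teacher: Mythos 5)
Your proof is correct, but it takes a genuinely shorter route than the paper's. The paper first invokes the parametrized Morse lemma to split off a nondegenerate quadratic part, $f_n=q(y)+h(x,z)$ with $\hess h(0,z)(0)=0$, then parametrizes $S(f)$ via the implicit function theorem, writes out the full Jacobian matrix $B$ of $g=f|_{S(f)}$ in those coordinates, and identifies its degenerate block through the relation $N_1\trans{M_1}=-\big(h_{z_iz_j}\big)$; the statement is then read off from $\det N_1$. You bypass all of this: you work directly in the normalization \eqref{eq:normalcoord}, where $\lambda_i=h_{y_i}$ and $\HH_\eta=(h_{y_iy_j})$, and combine three soft facts --- $\ker dg_p=\ker df_p\cap T_pS(f)$ from functoriality of the differential, $T_pS(f)=\ker d\Lambda_p$ from non-degeneracy, and the observation that $d\Lambda_p$ restricted to $\ker df_p=\spann{\partial y_1,\ldots,\partial y_{m-n+1}}$ \emph{is} $\HH_\eta(p)$ under the identification \eqref{eq:hh} --- so that both the ``moreover'' part and $S(g)=\{\det\HH_\eta=0\}$ (since $\dim S(f)=n-1$, singularity of $g$ means non-injectivity of $dg_p$) fall out at once. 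Your handling of the reduction step is also sound: the claims are coordinate-free except for $\HH_\eta$, whose kernel and determinant locus are controlled by Lemma \ref{lem:rankhf}. What the paper's heavier computation buys is the Morse-lemma normal form \eqref{eq:normal2} and the explicit coordinates on $S(f)$, which are reused later (e.g.\ in Lemma \ref{lem:ind1} and the proof of the main criterion), so the extra machinery is an investment rather than a necessity for this particular lemma.
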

\begin{proof}
The assumption and results do not depend on the
choice of coordinate systems, so
we may assume that $f$ has the form
\eqref{eq:normalcoord}.
Let us assume that $\rank \hess_0 h(0,y)=k$.
Then by the parametrized Morse Lemma
(see \cite[p.502]{ho}, \cite[p.97]{bg}),
there exist a coordinate system 
$\tilde y=(\tilde y_1,\ldots,\tilde y_{m-n+1})$
and a function $\tilde h$ such that
$$
\begin{array}{rcl}
h(x,y)
&=&
\displaystyle
q(\tilde y)+\tilde h(x,\tilde y_{k+1},\ldots,\tilde y_{m-n+1}),\\
q(\tilde y)&=&\sum_{i=1}^ke_i\tilde y_i^2,\quad
\tilde y=(\tilde y_1,\ldots,\tilde y_k),\quad e_i=\pm1
\end{array}
$$
holds.
We rewrite the coordinate
as $(y_1,\ldots,y_{k})=
(\tilde y_1,\ldots,\tilde y_{k})$ and 
$z=(z_1,\ldots,z_{k'})$ $=
(\tilde y_{k+1},\ldots,\tilde y_{m-n+1})$,
where $k'=m-n+1-k$.
Then 
$
f(x,y,z)
=
(x,f_n(x,y,z))
$
has the form
\begin{equation}\label{eq:normal2}
f_n(x,y,z)=q(y)+\tilde{\tilde{h}}(x,z),\quad
q(y)=\sum_{i=1}^ke_iy_i^2,\quad
\hess \tilde{\tilde{h}}(0,z)(0)=0.
\end{equation}
We rewrite $\tilde{\tilde{h}}(x,z)=h(x,z)$.
Furthermore, by Lemma \ref{lem:rankhf},
one can take an adapted 
collection of vector fields
$$
\xi_i=\partial x_i\ (i=1,\ldots,n-1),\ 
\eta_j=\partial y_j\ (j=1,\ldots,k),\ 
\eta_{k+j}=\partial z_j\ (j=1,\ldots,l).
$$
Then we see that
$\spann{\eta_{k+1},\ldots,\eta_{k+l}}=\ker \HH_\eta$
on $S(f)$.
Set
$$
\lambda_j=\det(\xi_1f,\ldots,\xi_{n-1}f,\eta_jf),\quad
j=1,\ldots,m-n+1,\quad
\Lambda=(\lambda_1,\ldots,\lambda_{m-n+1}).
$$
It follows that
$$
\Lambda
=
(2e_1y_1,\ldots,2e_ky_k,
h_{z_1}(x,z),\ldots,h_{z_l}(x,z))
$$
and
$S(f)=\{\Lambda=0\}$.
By non-degeneracy, we have $d\lambda_0\ne0$.
The matrix which represents $d\lambda_0$
is given by
\renewcommand{\arraystretch}{1.5}
\begin{align*}
A&=\left(
\begin{array}{c|c|c}
\phantom{\lefteqn{\dfrac{\dfrac{1}{1}}{\dfrac{1}{1}}}}
\Big((f_n)_{y_ix_j}\Big)_{\substack{i=1,\ldots,k,\\j=1,\ldots,n-1}}&
\Big((f_n)_{y_iy_j}\Big)_{i,j=1,\ldots,k}&
\Big((f_n)_{y_iz_j}\Big)_{\substack{i=1,\ldots,k,\\j=1,\ldots,l}}\\
\hline
\phantom{\lefteqn{\dfrac{\dfrac{1}{1}}{\dfrac{1}{1}}}}
\Big((f_n)_{z_ix_j}\Big)_{\substack{i=1,\ldots,l,\\j=1,\ldots,n-1}}&
\Big((f_n)_{z_iy_j}\Big)_{\substack{i=1,\ldots,l,\\j=1,\ldots,k}}&
\Big((f_n)_{z_iz_j}\Big)_{i,j=1,\ldots,l}
\end{array}
\right)(0)\\
&=\left(
\begin{array}{c|c|c|c}
*&*&\hess q(y)&O\\
\hline
\phantom{\lefteqn{\dfrac{\dfrac{1}{1}}{\dfrac{1}{1}}}}
\Big(h_{z_ix_j}\Big)_{\substack{i=1,\ldots,l,\\j=1,\ldots,l}}&
\Big(h_{z_ix_j}\Big)_{\substack{i=1,\ldots,l,\\j=l+1,\ldots,n-1}}&
O&
\Big(h_{z_iz_j}\Big)_{i,j=1,\ldots,l}
\end{array}
\right)(0)\\
&=:
\left(
\begin{array}{c|c|c|c}
*&*&\hess q(y)&O\\
\hline
M_1&M_2&O&M_3
\end{array}
\right)(0),
\end{align*}
\renewcommand{\arraystretch}{1}
where $O$ stands for a zero matrix.
Since $M_3(0)=O$,
we may assume $M_1$ is regular by a
coordinate change if necessary.
By the implicit function theorem,
there exist functions
$$
x_i(x_{l+1},\ldots,x_{n-1},z)\quad (i=1,\ldots,l),\quad
z=(z_1,\ldots,z_l),
$$
such that
\begin{equation}\label{eq:s2impl}
\begin{array}{l}
h_{z_j}\Big(
x_1(x_{\overrightarrow{l+1}},z),\ldots,
x_l(x_{\overrightarrow{l+1}},z),x_{\overrightarrow{l+1}},z\Big)
=0\quad  (j=1,\ldots,l),\\
\hspace{70mm}
x_{\overrightarrow{l+1}}=(x_{l+1},\ldots,x_{n-1}).
\end{array}
\end{equation}
Then 
$g:=f|_{S(f)}$ is expressed by
$$
g(x_{l+1},z)=f\Big(x_1(x_{\overrightarrow{l+1}}),\ldots,
x_l(x_{\overrightarrow{l+1}}),
x_{\overrightarrow{l+1}},0,z\Big).
$$
Hence the transportation matrix which represent
$d(f|_{S(f)})$ is given by
$$
B=\left(
\begin{array}{ccc|c|ccc}
(x_1)_{x_{l+1}}&\cdots&(x_l)_{x_{l+1}}&
&\\
\vdots&\vdots&\vdots&I&*\\
(x_1)_{x_{n-1}}&\cdots&(x_l)_{x_{n-1}}&
&\\
\hline
(x_1)_{z_{1}}&\cdots&(x_l)_{z_{1}}&&
\displaystyle\sum_{i=1}^l h_{x_i}(x_i)_{z_1}+h_{z_1}
\\
\vdots&\vdots&\vdots&O&\vdots\\
(x_1)_{z_{l}}&\cdots&(x_l)_{z_{l}}&&
\displaystyle\sum_{i=1}^l h_{x_i}(x_i)_{z_l}+h_{z_l}
\end{array}
\right)
=:
\left(
\begin{array}{c|c|c}
*&I&*\\
\hline
N_1&O&v
\end{array}
\right),
$$
where $I$ stands for the identity matrix.
Since 
$\partial z_1,\ldots,\partial z_l$ are contained in $\ker df$
on $S(f)$, the derivatives 
$h_{z_1},\ldots,h_{z_l}$ vanishes on $S(f)$,
and we have
$$
v=N_1\pmt{h_{x_1}\\
\vdots\\
h_{x_l}}.
$$
Hence, by elementary row operations
$B$ changes to
\begin{equation}\label{eq:dfs}
\left(
\begin{array}{c|c|c}
*&I&*\\
\hline
N_1&O&O
\end{array}
\right).
\end{equation}
Thus
$(x,0,z)\in S(f|_{S(f)})$ 
is equivalent to the determinant of
$N_1(x,0,z)$ being zero.
Differentiating \eqref{eq:s2impl},
we have
$$
N_1\trans{M_1}
=
-
\pmt{h_{z_1z_1}&\cdots&h_{z_1z_l}\\
\vdots&\vdots&\vdots\\
h_{z_1z_l}&\cdots&h_{z_lz_l}}.
$$
Since $M_1$ is regular,
$(x,0,z)\in S(f|_{S(f)})$ is equivalent to
$\det \hess h(0,z)=0$.
On the other hand,
$\eta_j\lambda_i = h_{z_iz_j}$
holds on $S(f)$,
and we have
$\hess h(0,z)=\HH_\eta$.
Since 
$\ker dg=\spann{\partial z_{1},\ldots,\partial z_{l}}$ 
by \eqref{eq:dfs},
one can easily see that
the last assertion holds true.
\end{proof}
Set
$$
H=\det \HH_\eta.
$$
\begin{definition}
A non-degenerate singular point $0$ is called
{\em $2$-singular}\/ if
$H(0)=0$.
\end{definition}
This is equivalent to $\ker df_0\cap T_0S(f)\ne\emptyset$.
Set $S_2(f)=\{H=0\}$.
The $2$-singularity of a non-degenerate singular point
does not depend on the choice of $\eta$.
By Lemma \ref{lem:s2}, it follows that
$S_2(f)=S(g)$.
\begin{definition}
A $2$-singular point $0$ is {\em $2$-non-degenerate}\/
if $d(H|_{S(f)})_0\ne0$.
\end{definition}
The condition is equivalent to
$\ker dH_0\not\supset T_0S(f)$.
By the definition, we see that
the $2$-non-degeneracy condition does not depend on the choice
of $\eta$, and if $p$ is $2$-non-degenerate, 
then $S_2(f)$ is a manifold near $p$. 
Moreover, $\rank \HH_\eta(0)=m-n$.
In fact, if we assume that
$\rank \HH_\eta(0)<m-n$, then
all the minor $m-n-1$ determinants of $\HH_\eta(0)$ 
vanish.
Since $dH_0$ is expressed by these minor determinants,
we have $dH_0=0$.

Let $p$ be a $2$-singular point.
Since $H(p)=0$, dimension of $\ker\HH_\eta(p)$ is
positive.
Let $\theta_p$ be a non-zero element of $\ker\HH_\eta(p)$.
\begin{lemma}\label{lem:thetaexist}
If\/ $\rank \HH_\eta=m-n$, then there exists a vector field\/ $\theta$
on\/ $(\R^m,0)$ such that\/
$\theta_p$ generates\/ $\ker \HH_\eta(p)$ when\/
$p\in S_2(f)(=\{H=0\})$.
Namely, 
$\left\langle\theta_p\right\rangle_{\R}=\ker \HH_\eta(p)$.
\end{lemma}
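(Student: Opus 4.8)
The plan is to produce $\theta$ explicitly from the cofactors of $\HH_\eta$. The algebraic engine is the adjugate identity $A\,\operatorname{adj}(A)=(\det A)I$, valid for every square matrix $A$. Applying it to $A=\HH_\eta(p)$ for $p\in S_2(f)=\{H=0\}$, where $\det\HH_\eta(p)=H(p)=0$, we get $\HH_\eta(p)\operatorname{adj}(\HH_\eta(p))=O$; hence, under the identification \eqref{eq:hh} (the matrix $\HH_\eta(p)$ is symmetric since $p\in S(f)$), every column of $\operatorname{adj}(\HH_\eta(p))$ lies in $\ker\HH_\eta(p)$, which is a line because $\rank\HH_\eta=m-n$. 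So the lemma reduces to choosing one fixed column of the adjugate that does not vanish anywhere on $S_2(f)$ near $0$.

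First I would fix the index. By hypothesis $\HH_\eta(0)$ is symmetric of corank one, so, diagonalizing it by an orthogonal change of basis, its adjugate has the form $c\,v\,\trans{v}$ with $c\neq0$ and $v$ a nonzero generator of $\ker\HH_\eta(0)$. The diagonal of $c\,v\,\trans{v}$ is $(c v_1^2,\dots,c v_{m-n+1}^2)\neq0$, so there is an index $i_0$ with the diagonal cofactor $C:=(\operatorname{adj}\HH_\eta)_{i_0i_0}$ satisfying $C(0)=c v_{i_0}^2\neq0$. Since the entries of $\operatorname{adj}\HH_\eta$ are polynomial expressions in the smooth functions $\eta_j\lambda_i$ defined near $0$ in $\R^m$, the function $C$ is smooth near $0$, and after shrinking the domain it is nowhere zero.

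Then I would set
$$
\theta:=\sum_{j=1}^{m-n+1}\big(\operatorname{adj}\HH_\eta\big)_{j\,i_0}\,\eta_j ,
$$
a smooth vector field near $0$, where $\operatorname{adj}$ is taken of the matrix $(\eta_j\lambda_i)_{i,j}$ (off $S(f)$ this matrix need not be symmetric, so one fixes this convention; on $S(f)$ it agrees with the symmetric one, which is all that matters below). For $p\in S_2(f)$, the coefficient vector of $\theta_p$ in the basis $\eta_1(p),\dots,\eta_{m-n+1}(p)$ is the $i_0$-th column of $\operatorname{adj}(\HH_\eta(p))$, which by the first paragraph lies in $\ker\HH_\eta(p)$, and its $\eta_{i_0}$-component equals $C(p)\neq0$, so $\theta_p\neq0$. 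Since $\dim\ker\HH_\eta(p)=(m-n+1)-\rank\HH_\eta(p)=1$, this gives $\spann{\theta_p}=\ker\HH_\eta(p)$, as claimed. (Here I use that $\rank\HH_\eta=m-n$ really holds throughout $S_2(f)$ near $0$: $\rank\HH_\eta\ge m-n$ on a whole neighborhood of $0$ because the nonvanishing of some $(m-n)$-minor is an open condition, while $\det\HH_\eta=H\equiv0$ on $S_2(f)$.) The only genuine subtlety is the one isolated above — ensuring the chosen cofactor column stays nonzero along $S_2(f)$, not just at $0$ — and it is handled precisely by the symmetry of $\HH_\eta$ on $S(f)$: for a symmetric corank-one matrix, a nonzero adjugate column forces the corresponding diagonal cofactor to be nonzero, and a nonzero smooth function stays nonzero nearby.
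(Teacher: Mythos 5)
Your proof is correct, but it takes a genuinely different route from the paper. The paper's proof is spectral: since $\HH_\eta(0)$ is symmetric with exactly one zero eigenvalue, the eigenvalue of minimum absolute value is simple near $0$, hence it and a corresponding eigenvector vary smoothly, and on $S_2(f)$ that eigenvalue is $0$, so the eigenvector field generates the kernel there. You instead build $\theta$ explicitly from a column of $\operatorname{adj}\HH_\eta$, using $\HH_\eta\operatorname{adj}(\HH_\eta)=H\cdot I$ to see that on $\{H=0\}$ every adjugate column lies in the (one-dimensional) kernel, and using symmetry at $0$ to write $\operatorname{adj}\HH_\eta(0)=c\,v\,\trans{v}$ and thereby select a diagonal cofactor that is nonzero at $0$, hence nearby. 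Your argument is airtight and, arguably, cleaner on one point: the paper's eigenvector construction implicitly needs $\kappa$ to stay real and its eigenvector to be chosen smoothly on a full neighborhood $U$, even though $\HH_\eta$ is only known to be symmetric on $S(f)$; your adjugate formula is manifestly smooth everywhere and only invokes symmetry where it is available. What the spectral approach buys is conceptual transparency (the kernel as the eigenspace of the vanishing eigenvalue); what yours buys is an explicit closed formula for $\theta$ in terms of the $\eta_j\lambda_i$ --- which is in fact exactly the cofactor expression the paper itself uses when it computes $\theta$ concretely for the Lefschetz example in Section 5.
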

\begin{proof}
The matrix $\HH_\eta$ is symmetric on $S(f)$,
and has only one zero-eigenvalue at $0$.
Thus the eigenvalue $\kappa$, that has minimum absolute value, 
is well-defined on a neighborhood $U$ of $0$, 
and it takes a real value on $U$.
We denote that by $\theta$
the non-zero eigenvector with respect to $\kappa$.
Then $\theta$ is an eigenvector of the zero eigenvalue
on $S_2(f)$ and so, 
one can extend $\theta$ on $(\R^m,0)$, and get
the desired vector field.
\end{proof}
We state a condition that $\theta$ is in the kernel of
$\HH_\eta$.
\begin{lemma}\label{lem:kerh}
For\/ $p\in S(f)$, the condition\/
$
\theta_p\in\ker\HH_\eta(p)
$
is equivalent to\/
$\theta \lambda_1=\cdots=\theta\lambda_{m-n+1}=0$ at\/ $p$.
\end{lemma}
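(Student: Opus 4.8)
The plan is to rewrite both sides of the equivalence as one and the same linear condition on the coefficients of $\theta$ in the $\eta$-frame, using the symmetry of $\HH_\eta$ on $S(f)$ as the bridge.

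First I would record that $\theta$ is, by the way it is constructed — as an element of $\ker\HH_\eta(p)$, respectively as the eigenvector field of Lemma~\ref{lem:thetaexist} — a section of the distribution spanned by $\eta_1,\dots,\eta_{m-n+1}$; so write $\theta=\sum_{j=1}^{m-n+1}a_j\eta_j$ with smooth functions $a_j$. Fix $p\in S(f)$. Since $\lambda_i$ is a function and $\theta$ acts on it as the derivation $\sum_j a_j\eta_j$, one has
\[
\theta\lambda_i(p)=\sum_{j=1}^{m-n+1}a_j(p)\,\eta_j\lambda_i(p)\qquad(i=1,\dots,m-n+1).
\]
Next I would invoke the identity $\eta_j\lambda_i=\eta_i\lambda_j$ valid on $S(f)$ — established just before Lemma~\ref{lem:rankhf} from $\eta_jf=0$ on $S(f)$ together with \eqref{eq:symm1} — to swap the two indices, obtaining $\theta\lambda_i(p)=\sum_j a_j(p)\,\eta_i\lambda_j(p)$. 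The right-hand side is exactly the $\eta_i$-coordinate of the image of $\theta_p=\sum_j a_j(p)\eta_j(p)$ under the map $\HH_\eta(p)$ given by \eqref{eq:hh}, i.e.
\[
\HH_\eta(p)\,\theta_p=\sum_{i=1}^{m-n+1}\bigl(\theta\lambda_i(p)\bigr)\,\eta_i(p).
\]
Since $\eta_1(p),\dots,\eta_{m-n+1}(p)$ are linearly independent near $0$, this vector is $0$ precisely when $\theta\lambda_i(p)=0$ for all $i$, which is the assertion.

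I do not expect a genuine obstacle here: the argument is bookkeeping. The one delicate point is the mismatch of index orders between the definition \eqref{eq:hh} of $\HH_\eta$ (which couples $a_j$ with $\eta_i\lambda_j$) and the plain action of the vector field $\theta$ on $\lambda_i$ (which couples $a_j$ with $\eta_j\lambda_i$); this is reconciled only by the symmetry of $\HH_\eta$ on $S(f)$, so it is important that $p$ be a point of $S(f)$ and not merely of a neighbourhood of the origin — away from $S(f)$ the matrix $(\eta_j\lambda_i)$ need not be symmetric and the two conditions can differ.
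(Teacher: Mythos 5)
Your proof is correct and follows essentially the same route as the paper: expand $\theta=\sum_j a_j\eta_j$, use the identification \eqref{eq:hh} together with the symmetry $\eta_j\lambda_i=\eta_i\lambda_j$ on $S(f)$ to identify $\HH_\eta(p)\,\theta_p$ with $\sum_i(\theta\lambda_i)(p)\,\eta_i(p)$, and conclude by linear independence of the $\eta_i(p)$. The paper's proof is the same computation read in the opposite direction, so there is nothing further to add.
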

\begin{proof}
Let
$\eta_1,\ldots,\eta_{m-n+1}$ be vector fields
generating $\ker df$, and set
$\theta=\sum_{i=1}^{m-n+1}\theta_i\eta_i$.
Then by \eqref{eq:hh} and symmetry of $\HH_\eta$, 
we see that
$$
\HH_\eta(\theta)
=
\sum_{i=1}^{m-n+1}
\Bigg(\sum_{j=1}^{m-n+1}\theta_j\eta_i\lambda_j\Bigg)
\eta_i
=
\sum_{i=1}^{m-n+1}
\Bigg(\sum_{j=1}^{m-n+1}\theta_j\eta_j\lambda_i\Bigg)
\eta_i
=
\sum_{i=1}^{m-n+1} (\theta\lambda_i)
\eta_i.
$$
Thus the assertion holds.
\end{proof}
If $p$ is a $2$-non-degenerate singular point,
then $S_2(f)$ is a manifold near $p$.
Thus the condition that
$\theta$ is tangent to $S_2(f)$ at a point on $S_2(f)$
is well-defined.
Hence we introduce the definition below.
In what follows,
we denote by $'$ the directional derivative along the direction $\theta$.
Namely, $H'=\theta H$.
Furthermore, $H^{(i)}=(H^{(i-1)})'$ $(i=2,3,\ldots)$ and
$H^{(1)}=H'$, $H^{(0)}=H$.
\begin{definition}
A $2$-non-degenerate singular point $0$ is called $3$-singular
if $\theta(0)\in T_0S_2(f)$.
\end{definition}
Since the $3$-singularity is determined by
$\theta$ at $p$, it does not depend on the extension
of $\theta$, and $S_2(f)$ does not depend on the
extension of $\eta$, so
the $3$-singularity does not depend on 
the extension of $\eta$.
We remark that the $3$-singularity is equivalent to $H'(0)=0$.
Let us set
$S_3(f)=\{q\,|\,\theta_q\in T_qS_2(f)\}$.
Then $S_3(f)$ is determined by $\theta$ on $S_2(f)$.
Thus $S_3(f)$ does not depend on the extension of
$\eta,\theta$.
Furthermore, we see that
$$
S_3(f)
=\{p\in S_2(f)\,|\,H'(p)=0\}
=\{p\in S(f)\,|\,H(p)=H'(p)=0\}.
$$
Using this terminology, $3$-singularity is equivalent to
$0\in S_3(f)$.
Moreover, we have:
\begin{lemma}
It holds that\/
$S_3(f)=S(f|_{S_2(f)})$.
\end{lemma}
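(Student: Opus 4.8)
The plan is to reduce the claimed equality of set-germs at $0$ to a pointwise comparison of kernels and then read off each side directly from the coordinate-free descriptions already obtained in Lemmas \ref{lem:s2} and \ref{lem:thetaexist}, so that no further normalization is needed.

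First I would record the setup: under the standing assumption that $0$ is $2$-non-degenerate, $S_2(f)$ is a manifold near $0$, lying inside the manifold $S(f)$, with $\dim S_2(f)=n-2$. Hence $f|_{S_2(f)}$ and its singular set $S(f|_{S_2(f)})$ are defined near $0$, and since $\dim S_2(f)=n-2<n$, a point $p\in S_2(f)$ lies in $S(f|_{S_2(f)})$ exactly when $d(f|_{S_2(f)})_p$ is not injective. As $d(f|_{S_2(f)})_p$ is the restriction of $df_p$ to $T_pS_2(f)$, this is the condition $\ker df_p\cap T_pS_2(f)\neq 0$.

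Next, using $T_pS_2(f)\subset T_pS(f)$, I would rewrite $\ker df_p\cap T_pS_2(f)=(\ker df_p\cap T_pS(f))\cap T_pS_2(f)$ and invoke Lemma \ref{lem:s2}, which identifies $\ker df_p\cap T_pS(f)$ with $\ker\HH_\eta(p)$. The one extra fact needed is that $\rank\HH_\eta\equiv m-n$ along $S_2(f)$ near $0$: on $S_2(f)$ we have $H=\det\HH_\eta=0$, so $\rank\HH_\eta\le m-n$, while lower semicontinuity of rank together with $\rank\HH_\eta(0)=m-n$ gives the reverse inequality in a neighborhood. Therefore $\dim\ker\HH_\eta(p)=1$ on $S_2(f)$, and by Lemma \ref{lem:thetaexist}, $\ker\HH_\eta(p)=\langle\theta_p\rangle_{\R}$ there. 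Combining these, $\ker d(f|_{S_2(f)})_p=\langle\theta_p\rangle_{\R}\cap T_pS_2(f)$, which is nonzero if and only if $\theta_p\in T_pS_2(f)$ --- precisely the condition defining membership in $S_3(f)$. Since this chain of equivalences holds for every $p\in S_2(f)$ near $0$, we obtain $S(f|_{S_2(f)})=S_3(f)$.

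I do not anticipate a real obstacle: the substantive analytic content lives in the earlier lemmas, and what remains is a bookkeeping of restrictions and a consistent reading of ``near $0$''. The single point that deserves a line of care is that $2$-non-degeneracy at $0$ propagates to a neighborhood within $S_2(f)$ --- this is what keeps $\theta$ well-defined, and $\rank\HH_\eta\equiv m-n$, at every point of $S_2(f)$ entering the argument.
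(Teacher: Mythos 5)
Your argument is correct and is essentially the paper's own proof written out in full: the paper simply asserts that for $p\in S_2(f)$ the kernel of $d(f|_{S_2(f)})_p$ is governed by $\theta_p$, which is exactly the identification $\ker df_p\cap T_pS_2(f)=\langle\theta_p\rangle_{\R}\cap T_pS_2(f)$ you derive from Lemmas \ref{lem:s2} and \ref{lem:thetaexist}. Your extra care about $\rank\HH_\eta\equiv m-n$ propagating along $S_2(f)$ is a welcome detail that the paper leaves implicit.
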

\begin{proof}
If $p\in S_2(f)$, it holds that
$\ker d(f|_{S_2(f)})_p=\spann{\theta_p}$.
Thus we obtain the result.
\end{proof}
\begin{definition}
A $3$-singular point $0$ is {\em $3$-non-degenerate\/} if
$d(H'|_{S_2(f)})_0\ne0$ holds.
\end{definition}
\begin{lemma}\label{lem:3nondeg}
The\/ $3$-non-degeneracy condition on
a\/ $3$-singular point does not 
depend on the extension of\/ $\eta$,
on the extension of\/ $\theta$, nor
on the coordinate system on the target.
\end{lemma}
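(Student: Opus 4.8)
The plan is to reduce all three independence assertions to a single observation: each of the admissible changes multiplies the function $H'|_{S_2(f)}$ by a nowhere-zero function, and since $0$ is $3$-singular we have $H'(0)=0$, so multiplying a function that already vanishes at $0$ by a nonzero factor cannot affect whether its differential at $0$ vanishes. As $3$-non-degeneracy is exactly the non-vanishing of $d(H'|_{S_2(f)})_0$, it is preserved in all three cases.

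First I would isolate the mechanism. Fix an admissible collection $\eta$, write $H=\det\HH_\eta$, and recall $\rank\HH_\eta=m-n$ near $0$ (by $2$-non-degeneracy), so $\ker\HH_\eta(p)$ is a line for every $p\in S_2(f)$; by Lemma \ref{lem:s2} this line $\spann{\theta_p}=\ker\HH_\eta(p)=\ker df_p\cap T_pS(f)$ lies in $T_pS(f)$, i.e. $\theta$ is tangent to $S(f)$ along $S_2(f)$. Now suppose $G$ is a function germ at $0$ with $G=wH$ on $S(f)$ for some nowhere-zero $w$, and $\bar\theta$ is any vector field with $\spann{\bar\theta_p}=\ker\HH_\eta(p)$ on $S_2(f)$, so $\bar\theta_p=c(p)\theta_p$ there with $c$ nowhere zero. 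For $p\in S_2(f)$, differentiating along a curve in $S(f)$ through $p$ with velocity $\theta_p$ and using $H(p)=0$, the Leibniz rule gives $\theta_pG=w(p)\,\theta_pH=w(p)H'(p)$, hence $\bar\theta_pG=c(p)w(p)H'(p)$; that is, $(\bar\theta G)|_{S_2(f)}=(cw)\,(H'|_{S_2(f)})$ as functions on $S_2(f)$. Since $H'(0)=0$ this yields $d\big((\bar\theta G)|_{S_2(f)}\big)_0=(cw)(0)\,d(H'|_{S_2(f)})_0$ with $(cw)(0)\ne0$.

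Then I would apply the mechanism three times. (i) Extension of $\theta$: take $w\equiv1$ and $G=H$, so $d((\bar\theta H)|_{S_2(f)})_0\ne0\iff d((\theta H)|_{S_2(f)})_0\ne0$. (ii) Extension of $\eta$: by \eqref{eq:hfkankei}, $\HH_{\bar\eta}=u\HH_\eta$ on $S(f)$ with $u=(\det A^1)^{m-n+1}\det B^2$ nowhere zero, hence $H_{\bar\eta}=u^{m-n+1}H$ on $S(f)$ and $\ker\HH_{\bar\eta}=\ker\HH_\eta$ on $S_2(f)$; thus $S_2(f)$ is the same set and the same $\theta$ is admissible for both collections, and the mechanism with $G=H_{\bar\eta}$, $\bar\theta=\theta$, $w=u^{m-n+1}$ gives the claim. (iii) Target coordinate change $\Phi$: as in the proof of Lemma \ref{lem:rankhf}, on $S(f)$ the matrix $\HH_\eta$ gets multiplied by the nowhere-zero Jacobian of $\Phi$ along $f$, hence $H$ by its $(m-n+1)$st power, while $\ker\HH_\eta|_{S_2(f)}$ — and therefore $S_2(f)$ and an admissible $\theta$ — is unchanged; apply the mechanism with $\bar\theta=\theta$ and $w$ that power.

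The one delicate point, and really the crux, is the step asserting that $\theta_pG$ depends only on $G|_{S(f)}$: it rests on $\theta$ being genuinely tangent to $S(f)$ along $S_2(f)$, i.e. on the identification $\ker\HH_\eta(p)=\ker df_p\cap T_pS(f)$ from Lemma \ref{lem:s2}, and it is precisely what lets us exploit the scaling identities (e.g. $H_{\bar\eta}=u^{m-n+1}H$) even though they hold only on $S(f)$ and fail off it. Everything else — the Leibniz bookkeeping, the fact that only the value $(cw)(0)$ matters, and the dimension count making $\ker\HH_\eta$ a line near $0$ — is routine once that is in place.
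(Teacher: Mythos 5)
Your proposal is correct and follows essentially the same route as the paper: both arguments reduce each admissible change to multiplying $H'|_{S_2(f)}$ by a nowhere-zero function, using the tangency $\theta_p\in T_pS(f)$ for $p\in S_2(f)$ to kill the error term supported off $S(f)$ (the paper's $\beta'=0$ step) and then the hypothesis $H'(0)=0$ to conclude that $d(H'|_{S_2(f)})_0$ changes only by a nonzero constant factor. Your version merely packages the three cases into one "mechanism" and additionally allows $\bar\theta$ to rescale $\theta$ on $S_2(f)$, a point the paper defers to its later inductive lemma.
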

\begin{proof}
Let $\tilde \theta$ be another extension of $\theta$.
Then 
$\tilde \theta H|_{S_2(f)}=\theta H|_{S_2(f)}$
holds on $S_2(f)$, since 
the $3$-non-degeneracy depends only
on the first differential by $\theta$.
Thus the $3$-non-degeneracy does not depend on
the extension of $\theta$.
On the other hand,
let $\tilde\eta$ be another extension of $\eta$, and
set $\tilde H=\det \HH_{\tilde\eta}$.
Then we have $\tilde H=\alpha H+\beta$, where
$\alpha|_{S(f)}\ne0$ and $\beta|_{S(f)}=0$.
Thus it holds that $\tilde H'=\alpha' H+\alpha H'+\beta'$.
We restrict this formula to $S_2(f)$.
We see that $\beta'=0$ on $S_2(f)$,
because $H=0$ and
$p\in S_2(f)$ then $\theta_p\in T_pS(f)$ holds.
Thus
$$
\tilde H'|_{S_2(f)}=\alpha H'|_{S_2(f)}
$$
holds. 
On the other hand,
if  $0$ is $3$-singular, then by $H'(0)=0$,
we see 
$d(\tilde H'|_{S_2(f)})_0=\alpha d(H'|_{S_2(f)})_0$.
Thus it does not depend on the extension of $\eta$.
\end{proof}
The $3$-non-degeneracy is equivalent to
$\ker d(H')_0\not\supset T_0S_2(f)$. Thus
if $0$ is $3$-non-degenerate, then $S_3(f)$ is
a manifold.
\begin{lemma}\label{lem:ind1}
Let\/ $0$ be a non-degenerate singular point.
Then\/ $0$ is\/ $3$-non-degenerate if and only if\/
$H=H'=0$ at\/ $0$ and\/ $\rank d(H,H')_0|_{T_0S(f)}=2$.
\end{lemma}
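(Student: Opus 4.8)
The plan is to unwind the nested chain of definitions behind ``$3$-non-degenerate'' and then reduce the whole statement to a one-line fact about restricting two linear functionals to a hyperplane. First I would spell out what ``$0$ is $3$-non-degenerate'' means in full: by the successive definitions this requires $0$ to be $3$-singular, hence $2$-non-degenerate, hence $2$-singular, so altogether it is the conjunction of $H(0)=0$, $d(H|_{S(f)})_0\ne0$, $H'(0)=0$ and $d(H'|_{S_2(f)})_0\ne0$. Throughout I would use, as recorded just before the definition of $3$-singularity, that $2$-non-degeneracy makes $S_2(f)$ a codimension-one submanifold of $S(f)$ near $0$ with $T_0S_2(f)=\ker\big(d(H|_{S(f)})_0\big)\subseteq T_0S(f)$ (and in particular $\rank\HH_\eta(0)=m-n$, so that $\theta$, hence $H'=\theta H$, is defined near $0$ by Lemma \ref{lem:thetaexist}). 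Since the entries of $\HH_\eta$ are defined on a full neighbourhood of $0$ in $\R^m$, so are $H$ and $H'$, and $d(H|_{S(f)})_0=dH_0|_{T_0S(f)}$, $d(H'|_{S_2(f)})_0=dH'_0|_{T_0S_2(f)}$ by the chain rule.

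Next I would set up the linear algebra. Put $V=T_0S(f)$, $\alpha=dH_0|_V$ and $\beta=dH'_0|_V\in V^*$, and, when $\alpha\ne0$, $W=\ker\alpha\subseteq V$, so that $W=T_0S_2(f)$ and $\beta|_W=d(H'|_{S_2(f)})_0$. The key observation is: $\alpha$ and $\beta$ are linearly independent in $V^*$ if and only if $\alpha\ne0$ and $\beta|_W\ne0$. Indeed, if $\{\alpha,\beta\}$ is independent then $\alpha\ne0$ and $\beta$ is not a scalar multiple of $\alpha$, so $\beta$ cannot vanish on the hyperplane $\ker\alpha=W$; conversely, if $\alpha\ne0$ and $\beta|_W\ne0$ then $\beta$ is not a multiple of $\alpha$ (every functional vanishing on the hyperplane $W$ is such a multiple), so $\{\alpha,\beta\}$ is independent. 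Since $\rank d(H,H')_0|_{T_0S(f)}=2$ is by definition the assertion that $v\mapsto(\alpha(v),\beta(v))$ maps $V$ onto $\R^2$, i.e. that $\alpha,\beta$ are independent in $V^*$, this observation rewrites the rank-$2$ hypothesis as ``$d(H|_{S(f)})_0\ne0$ and $d(H'|_{S_2(f)})_0\ne0$''.

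Finally I would assemble both implications simultaneously. The right-hand side of the lemma reads ``$H(0)=H'(0)=0$ and $\rank d(H,H')_0|_{T_0S(f)}=2$'', which by the previous paragraph is the same list of conditions as $H(0)=0$, $d(H|_{S(f)})_0\ne0$, $H'(0)=0$, $d(H'|_{S_2(f)})_0\ne0$ --- and this is exactly the unwound meaning of $3$-non-degeneracy from the first step. Hence the two sides are literally equivalent. I do not expect a genuine obstacle; the only points needing care are (i) that the rank-$2$ condition already forces $d(H|_{S(f)})_0\ne0$, so that $2$-non-degeneracy --- and with it the submanifold structure of $S_2(f)$ and the definability of $\theta$ and $H'$ --- comes for free on the right-hand side rather than being an extra hypothesis, and (ii) keeping every differential restricted to $T_0S(f)$ or $T_0S_2(f)$ and not confusing it with the differential on the ambient $\R^m$.
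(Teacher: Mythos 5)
Your proposal is correct and follows essentially the same route as the paper: both arguments come down to the identification $T_0S_2(f)=\ker\bigl(dH_0|_{T_0S(f)}\bigr)$ together with the elementary fact that two functionals $\alpha,\beta$ on $V=T_0S(f)$ are independent if and only if $\alpha\ne0$ and $\beta$ does not vanish on the hyperplane $\ker\alpha$. The only difference is presentational --- the paper first passes to the normal form \eqref{eq:normal2} and an explicit adapted frame to exhibit $T_0S(f)$ and $T_0S_2(f)$ before making this linear-algebra step, whereas you carry it out coordinate-freely; your explicit remark that the rank-$2$ condition already forces $2$-non-degeneracy (so that $\theta$ and $H'$ are defined) matches the paper's opening reduction.
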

\begin{proof}
Since both conditions imply the
$2$-non-degeneracy, we assume $0$ is $2$-non-degenerate.
Since $0$ is non-degenerate, 
we take a coordinate systems on the source
and target such that $f(x,y,z)
=
(x,f_n(x,y,z))
$
has the form \eqref{eq:normal2},
and $(f_n)_{z_1x_1}(0)\ne0$.
Moreover $dH_0\ne0$, we see 
$l=1$.
Then we take an adapted collection of vector fields
\begin{align*}
&\xi_1=\partial x_1,\ 
\xi_i=-(f_n)_{z_1x_i}\partial x_1+(f_n)_{z_1x_1}\partial x_i,\ 
(i=2,\ldots,n-1),\\
&\hspace{40mm}
\eta_j=\partial y_j\ (j=1,\ldots,m-n),\ 
\eta_{m-n+1}=\partial z_1.
\end{align*}
Then we see that
$$
T_0S(f)=\spann{\partial x_2,\ldots,\partial x_{n-1},\partial z_1},
\quad
\partial z_1\in
T_0S_2(f).
$$
We assume that $0$ is $3$-non-degenerate. 
Then $H=H'=0$ at $0$, and $dH'_0|_{T_0S_2(f)}\ne0$ holds.
Thus there exists a vector $\xi\in T_0S_2(f)$ such
that $\xi H'(0)\ne0$.
Since $S_2(f)=\{H=0\}$,\hfill it\hfill holds\hfill 
that\hfill $\xi H=0$.\hfill
By\hfill $dH_0|_{T_0S(f)}\ne0$,\hfill it\hfill holds\hfill
that \\
$\rank d(H,H')_0|_{T_0S(f)}=2$.
On the other hand, $\xi H=0$ 
holds for $\xi\in T_0S_2(f)$. 
Hence we see that 
$\rank d(H,H')_0|_{T_0S(f)}=2$ implies
$d(H')_0|_{S_2(f)}\ne0$.
\end{proof}
We define $(i+1)$-singularity and
$(i+1)$-non-degeneracy inductively.
Let the notion of 
$j$-singularity,
the set of $j$-singular points 
$S_j(f)=\{p\in (\R^m,0)\,|\,H(p)=\cdots=H^{(i-2)}(p)=0\}$ 
as a manifold, and 
$j$-non-degeneracy already be defined for 
$f:(\R^m,0)\to(\R^n,0)$
$(j=1,\ldots,i)$.
Moreover, we assume that these notions 
do not depend on the extensions of $\eta$ and $\theta$.
Here $1$-non-degenerate means non-degenerate,
and $1$-singular point means singular point.
\begin{definition}
An $i$-non-degenerate singular point $0$
is {\em $(i+1)$-singular\/} if
$\theta\in T_0S_i(f)$.
\end{definition}
We remark that, since the $(i+1)$-singularity is defined only by
the condition of
$\theta$ be on $S_i(f)$ and $S_i(f)$ itself, then
it does not depend on the extension
of $\eta$ and $\theta$.
We set $S_{i+1}(f)=\{\theta_p\in T_pS_{i}(f)\}$.
Then $S_{i+1}(f)$ also does not depend on the extension of
$\eta$ and $\theta$, and we have
$$
S_{i+1}(f)=
\{p\in (\R^m,0)\,|\,H(p)=\cdots=H^{(i-1)}(p)=0\}.
$$
\begin{definition}
\hfill An\hfill  $(i+1)$-singular\hfill point\hfill 
$0$\hfill is\hfill {\em $(i+1)$-non-degenerate\/}\hfill 
if \\
$d(H^{(i-1)}|_{S_{i}(f)})_0\ne0$ holds.
\end{definition}
\begin{lemma}
The\/ $(i+1)$-non-degeneracy does not depend on the extensions
of\/ $\theta$ and\/ $\eta$.
\end{lemma}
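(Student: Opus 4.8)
The plan is to follow the proof of Lemma~\ref{lem:3nondeg}, but to organize the bookkeeping so that a simultaneous change of $\eta$ and of $\theta$ is handled by a single induction on $i$. Throughout we use that $0$, being $(i+1)$-singular, is $j$-non-degenerate for every $j\le i$ (each non-degeneracy condition presupposes the previous one), so that $S_1(f),\dots,S_i(f)$ are manifolds near $0$; by the inductive hypothesis and by the paragraph preceding the statement, the strata $S_1(f),\dots,S_{i+1}(f)$ are already known to be independent of the extensions of $\eta,\theta$, and $0\in S_{i+1}(f)$, i.e. $H(0)=H'(0)=\cdots=H^{(i-1)}(0)=0$.

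First I would record the elementary effect of the two changes. Let $(\tilde\eta,\tilde\theta)$ be another admissible pair. By \eqref{eq:hfkankei} one has $\HH_{\tilde\eta}=\rho\,\HH_\eta$ on $S(f)$ with $\rho$ nowhere zero, hence $\ker\HH_{\tilde\eta}=\ker\HH_\eta$ on $S_2(f)$, so $\widehat H:=\det\HH_{\tilde\eta}$ satisfies $\widehat H=\alpha H+\beta$ with $\alpha|_{S(f)}\ne0$, $\beta|_{S(f)}=0$, and $\tilde\theta_p,\theta_p$ span the same line $\ker\HH_\eta(p)$ for $p\in S_2(f)$; thus $\tilde\theta=c\theta+\mu$ with $c(0)\ne0$ and $\mu|_{S_2(f)}=0$. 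Write $H^{(j)}$ for the $\theta$-derivatives of $H$ and $\widetilde H^{(j)}$ for the $\tilde\theta$-derivatives of $\widehat H$. The key step is the claim that for $0\le j\le i-1$ one can write $\widetilde H^{(j)}=\alpha_j H^{(j)}+R_j$ with $\alpha_j(0)\ne0$ and $R_j|_{S_{j+1}(f)}=0$. The case $j=0$ is $\widehat H=\alpha H+\beta$. For the inductive step, applying $\tilde\theta=c\theta+\mu$ and using $\theta H^{(j)}=H^{(j+1)}$ gives
\begin{equation*}
\widetilde H^{(j+1)}=c\alpha_j\,H^{(j+1)}+(\tilde\theta\alpha_j)\,H^{(j)}+\alpha_j\,(\mu H^{(j)})+\tilde\theta R_j .
\end{equation*}
Set $\alpha_{j+1}=c\alpha_j$, so $\alpha_{j+1}(0)\ne0$, and let $R_{j+1}$ be the sum of the last three terms. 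On $S_{j+2}(f)=\{H=\cdots=H^{(j)}=0\}\subset S_{j+1}(f)$ the first two of these terms vanish because $H^{(j)}=0$ there, the term $\alpha_j(\mu H^{(j)})$ vanishes because $\mu|_{S_2(f)}=0$ and $S_{j+2}(f)\subset S_2(f)$, and $\tilde\theta R_j$ vanishes because $R_j$ vanishes identically on the manifold $S_{j+1}(f)$ while $\tilde\theta_p$ is tangent to $S_{j+1}(f)$ for $p\in S_{j+2}(f)$ --- indeed $\tilde\theta_p=c(p)\theta_p$ there (since $\mu_p=0$), and $\theta_p\in T_pS_{j+1}(f)$ by the definition of $S_{j+2}(f)$ (and, in the base case $j=0$, by Lemma~\ref{lem:s2}, since then $\theta_p\in\ker df_p\cap T_pS(f)$). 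This proves the claim.

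Granting the claim, I would restrict to the manifold $S_i(f)$ near $0$. Since $R_{i-1}|_{S_i(f)}=0$ we get $\widetilde H^{(i-1)}|_{S_i(f)}=\alpha_{i-1}|_{S_i(f)}\cdot H^{(i-1)}|_{S_i(f)}$, and differentiating at $0$, where $H^{(i-1)}(0)=0$ because $0\in S_{i+1}(f)$,
\begin{equation*}
d\big(\widetilde H^{(i-1)}|_{S_i(f)}\big)_0=\alpha_{i-1}(0)\,d\big(H^{(i-1)}|_{S_i(f)}\big)_0 .
\end{equation*}
Since $\alpha_{i-1}(0)\ne0$, the condition $d(H^{(i-1)}|_{S_i(f)})_0\ne0$ defining $(i+1)$-non-degeneracy is unchanged when $(\eta,\theta)$ is replaced by $(\tilde\eta,\tilde\theta)$, which is the assertion; independence from the target coordinates, if one also wants it, follows in the same way as in Lemma~\ref{lem:nondegnotdep}. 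The delicate point is precisely the inductive bookkeeping in the claim: one must notice that the error term $R_j$ vanishes not merely on $S(f)$ but on the whole stratum $S_{j+1}(f)$, and that $\tilde\theta$ is tangent to $S_{j+1}(f)$ along $S_{j+2}(f)$ --- these two facts are what make all the correction terms drop out after the final restriction.
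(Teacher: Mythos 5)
Your proof is correct and follows essentially the same route as the paper: an induction on the order of the $\theta$-derivative in which the correction terms are shown to vanish on the successive strata because $H^{(j)}=0$ there and $\theta$ (hence $\tilde\theta$) is tangent to $S_{j+1}(f)$ along $S_{j+2}(f)$. The only difference is organizational — you merge the change of $\eta$ (handled via $\widehat H=\alpha H+\beta$) and the change of $\theta$ (handled via $\tilde\theta=c\theta+\mu$) into a single induction with the invariant $R_j|_{S_{j+1}(f)}=0$, whereas the paper treats the two cases separately.
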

\begin{proof}
We show this for the extension of $\theta$.
Let $\tilde\theta$ be a vector field satisfying that
$\tilde\theta|_{S_2(f)}=\delta\theta|_{S_2(f)}$ $(\delta\ne0)$.
It is enough to show that
$\delta^{i-1}H^{(i-1)}|_{S_{i}(f)}=
\tilde\theta^{i-1}H|_{S_{i}(f)}$.
We show it by induction.
We set
$\tilde\theta=\delta\theta+\gamma$, where
$\gamma$ is a vector field which satisfies $\gamma|_{S_2(f)}=0$.
When $i=2$, we see the conclusion.
We assume that
$(H^{(i-2)}
-
\delta^{i-2}\tilde\theta^{i-2}H)|_{S_{i-1}(f)}=0$.
Then by
$$
 \tilde\theta^{i-1}H|_{S_{i-1}(f)}
=\tilde\theta\,\tilde\theta^{i-2}H|_{S_{i-1}(f)}
=(\delta\theta+\gamma)\tilde\theta^{i-2}H|_{S_{i-1}(f)}
=\delta\theta\,\tilde\theta^{i-2}H|_{S_{i-1}(f)},
$$
we see that
$$
\begin{array}{cl}
&(\delta^{i-1}H^{(i-1)}-\tilde\theta^{i-1}H)|_{S_{i-1}(f)}\\[2mm]
=&
\delta\Big(
\delta^{i-2}\theta H^{(i-2)}-\theta
\tilde\theta^{i-2}H)\Big)\Big|_{S_{i-1}(f)}\\[2mm]
=&
\Big(\delta\theta\big(
\delta^{i-2}H^{(i-2)}-
\tilde\theta^{i-2}H\big)
-(\theta\delta^{i-2})H^{(i-2)}\Big)\Big|_{S_{i-1}(f)}.
\end{array}
$$
By the assumption of induction,
$(\delta^{i-2}H^{(i-2)}
-
\tilde\theta^{i-2}H)|_{S_{i-1}(f)}=0$ holds.
Since $\theta\in TS_{i-1}(f)$ and
$\theta^{i-2}H=0$
hold on
$S_i(f)$, we see that
$$
\Big(\theta\big(
\delta^{i-2}H^{(i-2)}-
\tilde\theta^{i-2}H\big)\Big)\Big|_{S_{i-1}(f)}=
(\theta\delta^{i-2})H^{(i-2)}\Big|_{S_{i-1}(f)}
=0.
$$

(2)
We take another extension $\tilde \eta$ of $\eta$,
and $\det \HH_{\tilde\eta}=\tilde H$.
Then by Lemma \ref{lem:rankhf} we see that
$\tilde H=\alpha H+\beta$ holds, where 
$\alpha|_{S(f)}\ne0$ and $\beta|_{S(f)}=0$.
Then by the same method as in the proof of 
Lemma \ref{lem:3nondeg},
one can see 
$(\alpha H^{(i-1)}-\tilde H^{(i-1)})|_{S_i(f)}=0$,
by using
$S_i(f)=\{p\in S_{i-1}(f)\,|\,H^{(i-2)}(p)=0\}$,
which proves the $(i+1)$-non-degeneracy 
does not depend on
the extension of $\eta$.
\end{proof}
We remark that $(i+1)$-non-degeneracy is equivalent to
$\ker d(H^{(i-1)})_0\not\supset T_0S_{i}$,
and we can continue until $S_i(f)$ becomes a point, namely
$i=n$.
Since $T_0S_n=\{0\}$, the $(n+1)$-singularity
always fails.
On other words,
$n$-non-degeneracy implies $(n+1)$-non-degeneracy and so on.
In fact, by the definition, $n$-non-degeneracy implies
$d(H^{(n-2)}|_{S_{n-1}(f)})_0\ne0$.
Since $S_{n-1}$ is one-dimensional, 
if
$\theta_p\in T_pS_{n-1}(f)$, then
$\spann{\theta_p}= T_pS_{n-1}(f)$ holds, and
$\theta (H^{(n-2)}|_{S_{n-1}(f)})(0)\ne0$
follows.
\begin{lemma}\label{lem:indk}
\toukouchange{
Let us assume that\/ $i\leq n$,
and\/ $0$ is a non-degenerate singular point.
Then the\\
$i$-non-degeneracy\hfill is\hfill equivalent\hfill to\/\hfill
$H(0)\hfill=\hfill H'(0)\hfill=\hfill\cdots\hfill=\hfill
H^{(i-2)}(0)\hfill=\hfill0$\hfill and\\
$\rank d(H,H',\cdots,H^{(i-2)})_0|_{T_0S(f)}=i-1$.}
{Let us assume that\/ $i\leq n$,
and\/ $0$ is a non-degenerate singular point.
Then the\/ $i$-non-degeneracy is equivalent to\/
$H(0)=H'(0)=\cdots=H^{(i-2)}(0)=0$ and\/
$\rank d(H,H',\cdots,H^{(i-2)})_0|_{T_0S(f)}=i-1$.}
\end{lemma}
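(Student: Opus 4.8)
\emph{Proof strategy.} The plan is to prove the stated equivalence by induction on $i$, with base case $i=2$: there the right-hand condition reads ``$H(0)=0$ and $\rank dH_0|_{T_0S(f)}=1$'', which is verbatim the definition of $2$-non-degeneracy (a $2$-singular point, that is $H(0)=0$, with $d(H|_{S(f)})_0\ne0$); Lemma~\ref{lem:ind1} is the case $i=3$, and the induction continues past it. Throughout I use the following facts, all available from the earlier part of the section: $S(f)$ is a manifold because $0$ is non-degenerate; if $0$ is $i$-non-degenerate then $S_i(f)=\{H=H'=\cdots=H^{(i-2)}=0\}$ is a manifold, $H^{(j)}(0)=0$ for $0\le j\le i-2$, and $H^{(i-1)}=\theta H^{(i-2)}$ with $(\theta g)(p)=dg_p(\theta(p))$ for any function $g$. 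The decisive observation is that the rank clause $\rank d(H,\dots,H^{(i-2)})_0|_{T_0S(f)}=i-1$ is exactly the transversality statement that lets one identify $T_0S_i(f)=T_0S(f)\cap\ker dH_0\cap\cdots\cap\ker dH^{(i-2)}_0$, a subspace of $T_0S(f)$ of codimension $i-1$.

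Assume the equivalence for a fixed $i$ with $2\le i<n$, and prove it for $i+1$. (Forward.) Suppose $0$ is $(i+1)$-non-degenerate. By definition $0$ is then $(i+1)$-singular, hence in particular $i$-non-degenerate, and $d(H^{(i-1)}|_{S_i(f)})_0\ne0$. The inductive hypothesis applied to $i$ gives $H^{(j)}(0)=0$ for $j\le i-2$ and $\rank d(H,\dots,H^{(i-2)})_0|_{T_0S(f)}=i-1$, hence the identification of $T_0S_i(f)$ above. From $(i+1)$-singularity, $\theta(0)\in T_0S_i(f)$; since $H^{(i-2)}$ vanishes identically on $S_i(f)$ this yields $H^{(i-1)}(0)=(\theta H^{(i-2)})(0)=dH^{(i-2)}_0(\theta(0))=0$, so $H^{(j)}(0)=0$ for all $j\le i-1$. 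Finally $d(H^{(i-1)}|_{S_i(f)})_0\ne0$ produces $v\in T_0S_i(f)$ with $dH^{(i-1)}_0(v)\ne0$; as $dH^{(j)}_0(v)=0$ for $j\le i-2$, the covector $dH^{(i-1)}_0|_{T_0S(f)}$ is not in the span of $dH_0|_{T_0S(f)},\dots,dH^{(i-2)}_0|_{T_0S(f)}$, so $\rank d(H,\dots,H^{(i-1)})_0|_{T_0S(f)}=i$.

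(Converse.) Suppose $H^{(j)}(0)=0$ for $j\le i-1$ and $\rank d(H,\dots,H^{(i-1)})_0|_{T_0S(f)}=i$. Dropping the last entry, $\rank d(H,\dots,H^{(i-2)})_0|_{T_0S(f)}=i-1$, and $H^{(j)}(0)=0$ for $j\le i-2$, so the inductive hypothesis says $0$ is $i$-non-degenerate; in particular $0$ is $i$-singular, so $\theta(0)\in T_0S_{i-1}(f)$, and $S_i(f)$ is cut transversally out of $S_{i-1}(f)$ by $H^{(i-2)}=0$, giving $T_0S_i(f)=T_0S_{i-1}(f)\cap\ker dH^{(i-2)}_0$. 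Since $0=H^{(i-1)}(0)=(\theta H^{(i-2)})(0)=dH^{(i-2)}_0(\theta(0))$, we get $\theta(0)\in T_0S_i(f)$, that is $0$ is $(i+1)$-singular. The increase of the rank from $i-1$ to $i$ means, by the standard fact that a covector lies in the linear span of finitely many others exactly when it vanishes on their common kernel, that there is $v\in T_0S(f)$ with $dH^{(j)}_0(v)=0$ for $j\le i-2$ and $dH^{(i-1)}_0(v)\ne0$; such a $v$ lies in $T_0S_i(f)$, so $d(H^{(i-1)}|_{S_i(f)})_0\ne0$ and $0$ is $(i+1)$-non-degenerate. This closes the induction.

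The only delicate point, and the expected main obstacle, is the bookkeeping of the nested singular sets: one must be sure that under the running hypotheses $S_2(f)\supset S_3(f)\supset\cdots\supset S_i(f)$ really are submanifolds of $S(f)$ with $T_0S_i(f)$ equal to the common kernel of $dH,\dots,dH^{(i-2)}$ inside $T_0S(f)$, since this is what legitimizes the final linear-algebra step (translating ``$dH^{(i-1)}$ is nonzero on $T_0S_i(f)$'' into ``the rank over $T_0S(f)$ goes up by one''). This is precisely where the rank clause of the inductive hypothesis, equivalently $i$-non-degeneracy, equivalently transversality of the defining equation $H^{(i-2)}|_{S_{i-1}(f)}=0$, must be invoked.
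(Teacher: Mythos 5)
Your proof is correct, and it follows the same inductive skeleton as the paper: assume the equivalence up to $i$, use $i$-non-degeneracy (via the inductive hypothesis) to get the vanishing of $H,\dots,H^{(i-2)}$ at $0$ together with the rank condition, and then translate $d(H^{(i-1)}|_{S_i(f)})_0\ne0$ into the rank jumping by one. The only real difference is in how that last translation is carried out: the paper chooses coordinates $(x_1,\dots,x_{n-2},z)$ on $S(f)$ with $\theta=\partial z$, applies the implicit function theorem to solve the first $i-2$ equations, and computes the differential of the restricted function as the Schur complement $L_2-K_2K_1^{-1}L_1$ of an explicit block matrix; you replace this computation by the coordinate-free observation that, once the rank clause identifies $T_0S_i(f)$ with $T_0S(f)\cap\ker dH_0\cap\cdots\cap\ker dH^{(i-2)}_0$, a covector is independent of $dH_0,\dots,dH^{(i-2)}_0$ on $T_0S(f)$ exactly when it is nonzero on that common kernel. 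The two arguments are mathematically the same step; yours avoids the matrix bookkeeping, while the paper's explicit form of $d(H^{(i-2)}|_{S_{i-1}(f)})$ is reused in spirit in the proof of the corollary to the main theorem. Your handling of the delicate point you flag --- that the rank clause is what legitimizes both the submanifold structure of $S_i(f)$ and the identification of its tangent space --- is exactly where the paper's transversality hypothesis enters as the invertibility of $K_1$.
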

\begin{proof}
By induction we assume that the conclusion is true for
$1,\ldots,i-1$, and 
that $0$ is an $i$-non-degenerate singular
point.
Then we have
$H(0)=H'(0)=\cdots=H^{(i-2)}(0)=0$ and
$\rank d(H,H',\cdots,H^{(i-3)})_0=i-2$.

We take a coordinate system $(x_1,\ldots,x_{n-2},z)$ of
$S$ satisfying 
$$
\rank 
\pmt
{
H_{x_1}&\cdots&H_{x_{i-2}}\\
H'_{x_1}&\cdots&H'_{x_{i-2}}\\
\vdots&\vdots&\vdots\\
 H^{(i-3)}_{x_1}&\cdots&H^{(i-3)}_{x_{i-2}}}(0)
=i-2$$
and
$\theta=\partial z$.
The transposition of the matrix representation
of $d(H,H',\cdots,H^{(i-2)})_0$ 
with respect to this coordinate system is
$$
\left(
\begin{array}{c|c}
K_1&
\lefteqn{\phantom{\displaystyle\int}}
L_1\\
\hline
K_2&
\lefteqn{\phantom{\displaystyle\int}}
L_2
\end{array}
\right)
:=
\left(
\begin{array}{ccc|cccc}
H_{x_1}&\cdots&H_{x_1}^{(i-3)}&H_{x_1}^{(i-2)}\\
\vdots&\vdots&\vdots&\vdots\\
H_{x_{i-2}}&\cdots&H_{x_{i-2}}^{(i-3)}&H_{x_{i-2}}^{(i-2)}\\
\hline
H_{x_{i-1}}&\cdots&H_{x_{i-1}}^{(i-3)}&H_{x_{i-1}}^{(i-2)}\\
\vdots&\vdots&\vdots&\vdots\\
H_{n-2}&\cdots&H_{x_{n-2}}^{(i-3)}&H_{x_{n-2}}^{(i-2)}\\
H'&\cdots&H^{(i-2)}&H^{(i-1)}
\end{array}
\right).
$$
By elementary matrix operations,
the above matrix is deformed to
$$
\left(
\begin{array}{c|c}
K_1&O\\
\hline
K_2&
\lefteqn{\phantom{\displaystyle\int}}
L_2
-K_2K_1^{-1} L_1\\
\end{array}
\right).
$$
Now applying the implicit function theorem,
we see that
$$
X:=\pmt{
(x_1)_{x_{i+1}}&\cdots&(x_i)_{x_{i+1}}\\
\vdots&\vdots&\vdots\\
(x_1)_{x_{n-2}}&\cdots&(x_i)_{x_{n-2}}\\
(x_1)'&\cdots&(x_i)'}
=
-K_2K_1^{-1}.
$$
Since
$$
\pmt{
(H^{(i-2)}|_{S_i})_{x_{i+1}}\\
\vdots\\
(H^{(i-2)}|_{S_i})_{x_{n-2}}\\
(H^{(i-2)}|_{S_i})'}
=\trans{\Big(XL_1+L_2\Big)}
=\trans{\Big(-K_2K_1^{-1}L_1
+L_2\Big)},
$$
we have the conclusion.

\toukouchange
{For\hfill the\hfill converse,\hfill if\hfill we\hfill 
assume\hfill that\hfill
$H(0)\hfill=\hfill H'(0)\hfill=\hfill\cdots\hfill=\hfill 
H^{(i-2)}(0)\hfill=\hfill 0$\hfill and\\
$\rank d(H,H',\cdots,H^{(i-3)})_0=i-2$,
we can see the conclusion just following the arguments
above from the bottom up.}
{For the converse, if we assume that
$H(0)=H'(0)=\cdots=H^{(i-2)}(0)=0$ and
$\rank d(H,H',\cdots,H^{(i-3)})_0=i-2$,
we can see the conclusion just following the arguments
above from the bottom up.}
\end{proof}
\section{Criteria}
\begin{theorem}
A map-germ\/ $f:(\R^m,0)\to(\R^n,0)$ is\/ $\A$-equivalent to
the\/ $k$-Morin singularity\/ $(k=1,\ldots,n)$ if and only if\/
$0$ is a\/ $k$-non-degenerate singularity but 
not\/ $(k+1)$-singular.
\end{theorem}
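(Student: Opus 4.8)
The plan is to reduce both directions to the inductive characterisation of Lemma~\ref{lem:indk}. By that lemma, together with the descriptions $S_j(f)=\{H=H'=\cdots=H^{(j-2)}=0\}$ of the higher singular sets, the hypothesis ``$0$ is $k$-non-degenerate but not $(k+1)$-singular'' is equivalent to: $0$ is a non-degenerate singular point; $H(0)=H'(0)=\cdots=H^{(k-2)}(0)=0$; $H^{(k-1)}(0)\ne0$ (which for $k=n$ already follows from $n$-non-degeneracy, as remarked in the text, so adds nothing); and $\rank d(H,H',\ldots,H^{(k-2)})_0|_{T_0S(f)}=k-1$. These conditions are invariant under $\A$-equivalence by Lemmas~\ref{lem:nondegnotdep}, \ref{lem:rankhf}, \ref{lem:3nondeg} and the coordinate-independence remarks, so for the ``only if'' implication it suffices to check them on the model $h_{0,k}$.

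For $k\ge2$, take the adapted collection $\xi_i=\partial x_i$, $\eta_j=\partial y_j$ $(j\le m-n)$, $\eta_{m-n+1}=\partial z$, and write $F=q(y)+z^{k+1}+\sum_{i=1}^{k-1}x_iz^i$ for the last coordinate of $h_{0,k}$. Then $\lambda_j=\partial_{y_j}F$ and $\lambda_{m-n+1}=\partial_z F$; since $\partial_z F$ does not involve $y$, the matrix $\HH_\eta$ is block diagonal with the constant non-degenerate block $\hess_y q$ and the scalar block $\partial_z^2 F$, so $H=c\,\partial_z^2 F$ with $c=\det\hess_y q\ne0$, one may take $\theta=\partial z$, and hence $H^{(j)}=c\,\partial_z^{j+2}F$ for every $j$. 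Differentiating $F$ in $z$ gives $H^{(j)}(0)=0$ for $j=0,\ldots,k-2$ and $H^{(k-1)}(0)=c\,(k+1)!\ne0$, and shows $d(H,H',\ldots,H^{(k-2)})_0$ to be, up to non-zero scalar factors, the tuple $(dx_2,dx_3,\ldots,dx_{k-1},dz)$; these $k-1$ covectors stay independent on $T_0S(f)=\spann{\partial x_2,\ldots,\partial x_{n-1},\partial z}$, so Lemma~\ref{lem:indk} gives the stated conditions. For $k=1$, $h_{0,1}=(x,q(y))$ has $H(0)=\det\hess_y q\ne0$, so $0$ is $1$-non-degenerate and not $2$-singular.

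Conversely, assume those conditions and normalise $f$ to $(x,h(x,y))$ with $dh_0=0$. When $k=1$, $H(0)=\det\hess_y h(0)\ne0$, so $\hess_y h(0)$ has maximal rank $m-n+1$, and the parametrised Morse lemma (as cited in the proof of Lemma~\ref{lem:s2}) yields $y$-coordinates with $h=q(\tilde y)+\varphi(x)$, whence $(u,v)\mapsto(u,v-\varphi(u))$ converts $f$ into $h_{0,1}$. When $k\ge2$, $2$-non-degeneracy forces $\rank\hess_y h(0)=m-n$, so the splitting lemma of the proof of Lemma~\ref{lem:s2} brings $f$ to $f_n=q(y)+h(x,z)$ with a single variable $z$ and $\partial_z^2 h(0,0)=0$. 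As above, $H=c\,\partial_z^2 h$, $\theta=\partial z$ and $H^{(j)}=c\,\partial_z^{j+2}h$, so the conditions become $\partial_z^i h(0,0)=0$ for $i=0,\ldots,k$, $\partial_z^{k+1}h(0,0)\ne0$, and the rank condition; in particular $z\mapsto h(0,z)$ has an $A_k$ point. Absorbing $h(x,0)$ into the target, translating $z$ by a function of $x$ (via the implicit function theorem) so that $\partial_z^k h(x,0)\equiv0$, re-absorbing the residue into the target, and changing the sign of the target or of $z$ if necessary, Taylor expansion in $z$ gives $h(x,z)=\sum_{i=1}^{k-1}c_i(x)z^i+z^{k+1}\rho(x,z)$ with $c_i(0)=0$ and $\rho(0,0)>0$.

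It remains to absorb the unit $\rho$: this is the normal-form statement for $\mathcal R^+$-versal unfoldings of the $A_k$-singularity $z^{k+1}$, and it reduces $h$ to $z^{k+1}+\sum_{i=1}^{k-1}\tilde c_i(x)z^i$ via a $z$-coordinate change depending smoothly on $x$ together with one further target shift --- the Malgrange preparation theorem, or an explicit order-by-order construction of the change, being the mechanism. A computation parallel to the one above then shows the rank condition is equivalent to the linear independence of $d\tilde c_1(0),\ldots,d\tilde c_{k-1}(0)$ on $(\R^{n-1},0)$; choosing a diffeomorphism $\psi$ of $(\R^{n-1},0)$ with $\tilde c_i\circ\psi=x_i$ for $i=1,\ldots,k-1$, the source diffeomorphism $(x,y,z)\mapsto(\psi(x),y,z)$ and the compensating target diffeomorphism $(u,v)\mapsto(\psi^{-1}(u),v)$ carry $f_n$ to $q(y)+z^{k+1}+\sum_{i=1}^{k-1}x_iz^i$, i.e.\ $f\sim h_{0,k}$. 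The main obstacle is this last part of the converse --- checking that the rank condition is precisely versality and carrying out the preparation-type reduction to the polynomial model; the rest is organised bookkeeping with the invariance lemmas and the splitting lemma.
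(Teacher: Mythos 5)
Your proposal is correct and shares the paper's global skeleton---invariance of the conditions plus a direct check on the model $h_{0,k}$ for one implication, then normalization and the parametrized Morse (splitting) lemma to reach $f_n=q(y)+h(x,z)$ with a single kernel variable $z$ for the other---but it takes a genuinely different route at the final step. The paper, once at $f_n=q(y)+h(x,z)$, restricts to the equidimensional germ $\bar f(x,z)=(x,h(x,z)):(\R^n,0)\to(\R^n,0)$, translates $k$-non-degeneracy into $\lambda=\eta\lambda=\cdots=\eta^{k-1}\lambda=0$, $\eta^{k}\lambda\ne0$ and $\rank d(\lambda,\eta\lambda,\ldots,\eta^{k-1}\lambda)_0=k$ for $\lambda=h_z$, $\eta=\partial z$, and then quotes Theorem A1 of the $A_k$-singularities paper of Saji--Umehara--Yamada to obtain the polynomial normal form; the nondegenerate quadratic $q$ rides along. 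You instead perform the reduction by hand: Tschirnhaus shift in $z$, absorption of $h(x,0)$ into the target, Malgrange preparation to kill the unit $\rho$ and all terms of $z$-degree $\ge k$, and finally straightening the unfolding functions $\tilde c_i$ using the rank condition. Both routes delegate the same hard analytic content to a known result---yours to the classical $\mathcal{R}^{+}$ polynomial normal form for unfoldings of $A_k$, the paper's to its earlier criteria paper---and your identification of condition (2) with the linear independence of $d\tilde c_1(0),\ldots,d\tilde c_{k-1}(0)$ is precisely the versality statement hidden inside that citation. Your explicit computations (block-diagonality of $\HH_\eta$, $H^{(j)}=c\,\partial_z^{j+2}h$ with $\theta=\partial z$, the evaluation on the model, the translation of the rank condition through non-degeneracy $d\tilde c_1(0)\ne0$) are all correct, and the step you flag as the ``main obstacle'' is indeed the single place where a nontrivial theorem must be invoked; what your approach buys is self-containedness relative to classical singularity theory of functions, at the cost of redoing a preparation-theorem argument that the paper gets for free from its reference.
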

\begin{proof}
Since the $k$-non-degenerate conditions and 
$(k+1)$-singularity conditions 
do not depend on the coordinate systems on
the source nor the target space,
the sufficiency is obvious by just checking
the normal form \eqref{eq:morinnor1} of the Morin singularity.
We now show the necessity.
Let us assume that 
$0$ is a $k$-non-degenerate singularity but 
not $(k+1)$-singular.
Since the assumption 
does not depend on the coordinate systems on
the source and the target space, and since $\rank df_0=n-1$,
we take a coordinate system such that \eqref{eq:normalcoord}
holds.
When $k=1$, we see that
$\hess h(0,y)$ at $y=0$ is regular.
By the parametrized Morse lemma, we have the conclusion.
We now assume $2\leq k\leq n$.
It is known that the assumption
does not depend on the extension of $\theta$,
and by Lemma \ref{lem:rankhf} and \eqref{eq:hfkankei},
$H_f$ is multiplied by a non-zero function
when changing $\eta$.
Moreover, the condition also does not depend on the extension of
$\eta$, hence
we may take $\eta_i=\partial y_i$,
and coordinate $z$ such that $\theta=\partial z$ at $0$.
Under this coordinate system, we rewrite $f$ as
$$
f(x,y,z)=(x,h(x,y,z)),\quad
x=(x_1,\ldots,x_{n-1}),\quad
y=(y_1,\ldots,y_{m-n}).
$$
Then $\hess h(0,y,0)$  
is regular at $y=0$, by the parametrized Morse
lemma, we may choose coordinates $y$ such that
$f$ takes the form
$$
\begin{array}{l}
\displaystyle
f(x,y,z)=(x,q(y)+h(x,z)),\ 
q(y)=\sum_{i=1}^{m-n}\pm y_i^2,\\
\hspace{50mm}
x=(x_1,\ldots,x_{n-1}),\ 
y=(y_1,\ldots,y_{m-n}).
\end{array}
$$
Then we see that $\theta=\partial z$.
Under this coordinate system,
$H(0)=\theta^2 h(0)$ and
$\theta H(0)=\theta^3 h(0)$ hold.
Moreover, the $i$-non-degeneracy 
implies that
$$
(\theta h_{x_1},\ldots,\theta h_{x_{n-1}})(0)\ne(0,\ldots,0).
$$
We set
$$
\bar f(x,z)=f(x,0,z)
=(x,g(x,z)):(\R^n,0)\to(\R^n,0).$$
Then $\bar f(x,z)$ at $0$ is an $A_k$-Morin singularity
in the sense of \cite{aksing}.
Because the Jacobian is $\lambda:=\det J_{\bar f}=g_z$,
and $\eta:=\partial z$ generates the kernel of $d\bar f$ at 
the singular set.
Thus by the assumption of $i$-non-degeneracy,
we have
$\lambda=\eta\lambda=\cdots=\eta^{k-1}\lambda=0$,
$\eta^{k}\lambda\ne0$,
$\rank d(\lambda,\eta\lambda,\ldots,\eta^{k-1}\lambda)_0=k$.
Hence, it follows by (\cite[Theorem A1]{aksing}),
that $\bar f(x,z)$ is $\A$-equivalent to
$\big(x,z^{k+1}+\sum_{i=1}^{k-1}x_iz^i\big)$.
Since $\det\hess q(0)\ne0$,
we see the assertion.
\end{proof}
The proof here is based on that of Morin \cite{morin}.
By Lemma \ref{lem:indk}, we have the following:
\begin{theorem}\label{thm:main}
Let\/ $0$ be a non-degenerate singular point of\/
$f:(\R^m,0)\to(\R^n,0)$.
Then\/
$f$ at\/ $0$ is a\/ $k$-Morin singularity\/ $(2\leq k\leq n)$ 
if and only if
both conditions above holds true\/{\rm :}
\begin{enumerate}
\item $H=H'=\cdots=H^{(k-2)}=0$, $H^{(k-1)}\ne0$,
\item $\rank d\big(H,H',\ldots,H^{(k-2)}\big)_0|_{T_0S(f)}=k-1$.
\end{enumerate}
Here, 
$\HH_\eta$ is determined by\/ \eqref{eq:hheta}
for an adapted collection of vector fields\/ $(\xi,\eta)$
with respect to\/ $f$,
$H=\det \HH_\eta$, and\/
$\theta$ is a vector field
such that it generates\/ $\ker\HH_\eta$ on\/ $\{H=0\}$,
and\/ $'$ means the directional derivative along\/ $\theta$.
\end{theorem}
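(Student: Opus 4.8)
The plan is to obtain Theorem \ref{thm:main} as an immediate consequence of two results already in hand: the unnumbered criterion opening this section, which characterizes the $k$-Morin singularity as a $k$-non-degenerate singular point that is not $(k+1)$-singular, and Lemma \ref{lem:indk}, which rewrites $k$-non-degeneracy as explicit rank and vanishing conditions on $H,H',\dots,H^{(k-2)}$. So the only genuinely new point to establish is that, for a $k$-non-degenerate singular point, the failure of the $(k+1)$-singularity is equivalent to $H^{(k-1)}(0)\ne0$.

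First I would fix an adapted collection of vector fields $(\xi,\eta)$ with respect to $f$, set $H=\det\HH_\eta$, and choose a vector field $\theta$ generating $\ker\HH_\eta$ on $\{H=0\}$, as furnished by Lemma \ref{lem:thetaexist}; by Lemma \ref{lem:3nondeg} together with the subsequent independence lemmas, every condition below is insensitive to these choices and to the coordinates on the target, so this set-up is harmless. Since $2\le k\le n$ and $0$ is a non-degenerate singular point, I would apply Lemma \ref{lem:indk} with $i=k$: it gives that $0$ is $k$-non-degenerate if and only if $H=H'=\cdots=H^{(k-2)}=0$ at $0$ and $\rank d(H,H',\dots,H^{(k-2)})_0|_{T_0S(f)}=k-1$, which is exactly condition (2) together with the vanishing part of condition (1).

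Next I would pin down the non-vanishing assertion $H^{(k-1)}(0)\ne0$. A $k$-non-degenerate point is in particular $k$-singular, hence $\theta_0\in T_0S_{k-1}(f)$, and $S_k(f)=\{p\in S_{k-1}(f)\,|\,H^{(k-2)}(p)=0\}$ is a submanifold near $0$ precisely because $d(H^{(k-2)}|_{S_{k-1}(f)})_0\ne0$ under the $k$-non-degeneracy hypothesis. Therefore $\theta_0\in T_0S_k(f)$ --- that is, $0$ is $(k+1)$-singular --- if and only if $\theta H^{(k-2)}(0)=H^{(k-1)}(0)=0$, which is the higher-level analogue of the remark recorded after the definition of the $3$-singularity. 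Hence ``$0$ is not $(k+1)$-singular'' is equivalent to $H^{(k-1)}(0)\ne0$, completing condition (1); combining with the previous paragraph, $f$ at $0$ is a $k$-Morin singularity if and only if (1) and (2) hold.

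I do not anticipate a real obstacle --- the theorem is essentially a restatement of the unnumbered criterion once Lemma \ref{lem:indk} is available. The step requiring the most care is the bookkeeping of which non-degeneracy hypotheses are already built into the notions ``$(k+1)$-singular'' and ``$(k+1)$-non-degenerate'', so that the equivalence ``not $(k+1)$-singular $\iff H^{(k-1)}(0)\ne0$'' is invoked only under the standing assumption that $0$ is $k$-non-degenerate; without that assumption $H^{(k-1)}$ is not the pertinent invariant. I would also point out explicitly that $k=1$ is excluded from this statement because the fold case is already dispatched by the parametrized Morse lemma inside the proof of the unnumbered theorem.
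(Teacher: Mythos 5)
Your proposal is correct and follows exactly the paper's route: the paper derives Theorem \ref{thm:main} in one line ("By Lemma \ref{lem:indk}, we have the following") from the unnumbered criterion theorem, and your argument simply spells out that derivation, including the observation that under $k$-non-degeneracy the failure of $(k+1)$-singularity reduces to $H^{(k-1)}(0)\ne0$ via the description $S_{k+1}(f)=\{H=\cdots=H^{(k-1)}=0\}$. No discrepancy with the paper's proof.
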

Moreover we have the following corollary.
\begin{corollary}\label{cor:main}
Let\/ $0$ be a singular point of\/
$f:(\R^m,0)\to(\R^n,0)$ satisfying\/ $\rank df_0=n-1$.
Then\/
$f$ is a\/ $k$-Morin singularity\/ $(2\leq k\leq n)$ at\/ $0$
if and only if
\begin{enumerate}
\item[$(a)$]\label{item:ak1}
  $H=H'=\cdots=H^{(k-2)}=0$, $H^{(k-1)}\ne0$,
\item[$(b)$]\label{item:ak2}
  $\rank d(\lambda_1,\ldots,\lambda_{m-n+1},H,H',\ldots,H^{(k-2)})_0
=
m-n+k$,
\end{enumerate}
where\/ $H$ is the same as in Theorem\/ $\ref{thm:main}$.
\end{corollary}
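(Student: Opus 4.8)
The plan is to deduce Corollary~\ref{cor:main} from Theorem~\ref{thm:main} by converting the intrinsic rank condition in that theorem — $\rank d(H,H',\dots,H^{(k-2)})_0|_{T_0S(f)}=k-1$ — into the extrinsic condition $(b)$, the link being the elementary identity
\[
\rank(P,Q)=\rank P+\rank\bigl(Q|_{\ker P}\bigr)
\]
valid for any pair of linear maps $P\colon V\to V'$, $Q\colon V\to V''$ on a common domain; it follows at once from $\ker(P,Q)=\ker P\cap\ker Q=\ker(Q|_{\ker P})$, hence $\dim\ker(P,Q)=\dim\ker P-\rank(Q|_{\ker P})$.

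First I would note that condition $(b)$, taken by itself, forces $0$ to be a non-degenerate singular point: the covectors $d(\lambda_1)_0,\dots,d(\lambda_{m-n+1})_0$ form part of the family of $m-n+k$ covectors that $(b)$ declares linearly independent, so $\rank d\Lambda_0=m-n+1$. Consequently $S(f)=\Lambda^{-1}(0)$ is a manifold near $0$ with $T_0S(f)=\ker d\Lambda_0$, and $H$, $\theta$, $H'$, \dots\ are available exactly as in Theorem~\ref{thm:main}. In the reverse direction, a $k$-Morin singularity is itself a non-degenerate singular point, as one reads off the normal form \eqref{eq:morinnor1}: there the components of $\Lambda$ are the partials of the non-degenerate quadratic $q$ together with a last component containing the monomial $x_1$, so $d\Lambda_0$ has full rank $m-n+1$. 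Thus in both implications we may argue under the standing hypothesis that $0$ is non-degenerate.

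Then I would apply the displayed identity with $V=T_0\R^m$, $P=d\Lambda_0$ and $Q=d(H,H',\dots,H^{(k-2)})_0$. Since $\rank P=m-n+1$ and $\ker P=T_0S(f)$, the identity gives $\rank(P,Q)=(m-n+1)+\rank(Q|_{T_0S(f)})$; hence $(b)$, i.e.\ $\rank(P,Q)=m-n+k$, holds if and only if $\rank(Q|_{T_0S(f)})=k-1$, which is precisely condition $(2)$ of Theorem~\ref{thm:main}. As $(a)$ is literally condition $(1)$ of that theorem, conditions $(a)$ and $(b)$ together are equivalent to ``$0$ is non-degenerate and conditions $(1)$--$(2)$ of Theorem~\ref{thm:main} hold'', which by Theorem~\ref{thm:main} is equivalent to $f$ being a $k$-Morin singularity at $0$.

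There is essentially no hard step; the only point demanding care is the matching of the two rank conditions, in particular confirming that once non-degeneracy holds $\ker d\Lambda_0$ is exactly the tangent space $T_0S(f)$ used in Theorem~\ref{thm:main}, and that the arithmetic $m-n+k=(m-n+1)+(k-1)$ is consistent over the whole range $2\le k\le n$. A minor point worth recording is that $(b)$ also forces $dH_0\ne0$, so by the remark following the definition of $2$-non-degeneracy one has $\rank\HH_\eta(0)=m-n$; this guarantees that the vector field $\theta$ — and with it the derivatives $H',\dots,H^{(k-2)}$ occurring in $(a)$ and $(b)$ — is legitimately defined, just as in Theorem~\ref{thm:main}.
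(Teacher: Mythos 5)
Your proof is correct, and it follows the same overall strategy as the paper: both reduce the corollary to Theorem \ref{thm:main} by showing that condition $(b)$ is equivalent to the conjunction of non-degeneracy and condition $(2)$ of that theorem, condition $(a)$ being literally condition $(1)$. The difference is in how that equivalence is established. The paper chooses a particular adapted collection with $\xi_2,\ldots,\xi_{n-1}\in T_0S(f)$ and with $\theta$ as the last kernel field, writes the matrix of $d(\lambda_1,\ldots,\lambda_{m-n+1},H,\ldots,H^{(k-2)})_0$ in block form, and exploits the vanishing of many entries (coming from Lemma \ref{lem:kerh}, the symmetry of $\HH_\eta$, and condition $(a)$) to read off that $(b)$ amounts to $\xi_1\lambda_{m-n+1}(0)\ne0$ together with a rank-$(k-2)$ condition on the block $\big(\xi_jH^{(i)}\big)$, which it then identifies with non-degeneracy and condition $(2)$. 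You bypass this computation entirely with the identity $\rank(P,Q)=\rank P+\rank\big(Q|_{\ker P}\big)$ applied to $P=d\Lambda_0$ and $Q=d(H,\ldots,H^{(k-2)})_0$, using that non-degeneracy makes $\ker d\Lambda_0=T_0S(f)$. This is shorter and makes explicit a point the paper leaves tacit: since the number of functions in $(b)$ equals the asserted rank $m-n+k$, all their differentials must be independent, so $(b)$ alone forces $\rank d\Lambda_0=m-n+1$ and non-degeneracy need not be hypothesized. Your closing observation that $(b)$ gives $dH_0\ne0$, hence $\rank\HH_\eta(0)=m-n$ and hence the existence of $\theta$ via Lemma \ref{lem:thetaexist}, is exactly the bookkeeping needed for $H',\ldots,H^{(k-1)}$ to be meaningful before Theorem \ref{thm:main} is invoked; the paper does not spell this out.
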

\begin{proof}
Since the condition $(a)$ is the same, we show that
$(b)$ is equivalent to non-degeneracy and the condition
$(2)$ of Theorem \ref{thm:main}.
The condition does not depend on the choice of an adapted 
collection of vector fields,
so we choose 
$\xi_1,\ldots,\xi_{n-1},\eta_1,\ldots,\eta_{m-n},\theta$
satisfying 
$\xi_2,\ldots,\xi_{n-1}\in T_0S(f)$,
since 
$\spann{\eta_1,\ldots,\eta_{m-n}}$ $\cap\, T_0S(f)=\{0\}$.
Then the transportation of the matrix representation of
the differential
$
d(\lambda_1,\ldots,\lambda_{m-n+1},H,H',\ldots,H^{(k-2)})_0
$
has the form
$$
\begin{array}{l}
\left(
\begin{array}{c|c|c|c}
\begin{array}{c}
\xi_1\lambda_1\\
\vdots\\
\xi_1\lambda_{m-n}\\
\end{array}
&
\Big(\xi_j\lambda_i\Big)_{\substack{i=1,\ldots,m-n,\\j=2,\ldots,n-1}}
&
\Big(\eta_j\lambda_i\Big)_{\substack{i=1,\ldots,m-n,\\j=1,\ldots,m-n}}
&
\begin{array}{c}
\lambda_1'\\
\vdots\\
\lambda_{m-n}'
\end{array}
\\
\hline
\xi_1\lambda_{m-n+1}
&
\lefteqn{\phantom{\displaystyle\frac{\frac{1}{1}}{\frac{1}{1}}}}
\Big(\xi_j\lambda_{m-n+1}\Big)_{j=2,\ldots,n-1}
&
\Big(\eta_j\lambda_{m-n+1}\Big)_{j=1,\ldots,m-n}
&
\lambda_{m-n+1}'\\
\hline
\begin{array}{c}
\xi_1 H\\
\vdots\\
\xi_1 H^{(k-3)}
\end{array}
&
\Big(\xi_j H^{(i)}\Big)_{\substack{i=0,\ldots,k-3,\\j=2,\ldots,n-1}}
&
\Big(\eta_j H^{(i)}\Big)_{\substack{i=0,\ldots,k-3,\\j=1,\ldots,m-n}}
&
\begin{array}{c}
H'\\
\vdots\\
H^{(k-2)}
\end{array}\\
\hline
\lefteqn{\phantom{\displaystyle\frac{\frac{1}{1}}{\frac{1}{1}}}}
\xi_1 H^{(k-2)}&
\Big(\xi_j H^{(k-2)}\Big)_{j=2,\ldots,n-1}&
\Big(\eta_j H^{(k-2)}\Big)_{j=1,\ldots,m-n}&
H^{(k-1)}
\end{array}
\right)\\
=
\left(
\begin{array}{c|c|c|c}
\begin{array}{c}
*\\
\vdots\\
*\\
\end{array}
&
O
&
\HH_\eta
&
\begin{array}{c}
0\\
\vdots\\
0
\end{array}
\\
\hline
\xi_1\lambda_{m-n+1}
&
0\hspace{5mm}\cdots\hspace{5mm}0
&
0\hspace{5mm}\cdots\hspace{5mm}0
&
0\\
\hline
\begin{array}{c}
*\\
\vdots\\
*
\end{array}
&
\Big(\xi_j H^{(i)}\Big)_{\substack{i=0,\ldots,k-3,\\j=2,\ldots,n-1}}
&
*&
\begin{array}{c}
0\\
\vdots\\
0
\end{array}\\
\hline
*&
*&
*&
H^{(k-1)}
\end{array}
\right)
\end{array}
$$
at the origin. 
Thus we see that the condition $(b)$ is equivalent to
$\xi_1\lambda_{m-n+1}(0)\ne0$ and
$$
\rank\left(
\begin{array}{ccc}
\xi_2H&\cdots&\xi_{n-1}H\\
\vdots&\vdots&\vdots\\
\xi_2H^{(k-3)}&\cdots&\xi_{n-1}H^{(k-3)}
\end{array}
\right)(0)=k-2.
$$
This is nothing but the non-degeneracy and
the condition 
$(2)$ of Theorem \ref{thm:main}.
\end{proof}

\section{Criteria for small $k$}
In this section, we remark that for small $k$,
the criteria can be simplified.
In what follows, 
for real numbers $a,b\in\R$, the notation $a\sim b$ 
implies $a=0$ is equivalent to $b=0$,
and for functions $f,g$, the notation $f\sim g$ 
implies that $g$ 
is multiplication by a non-zero function $f$.
\subsection{Criterion of the fold}
\begin{corollary}
Let\/ $f=(f_1,\ldots,f_n):(\R^m,0)\to(\R^n,0)$ be a map-germ
satisfying\/
$(df_n)_0=0$ and\/ $\rank df_0=n-1$.
Then\/ $f$ is a fold singularity at\/ $0$ if and only if\/
$\rank \hess_{\hat\eta} f_n=m-n+1$,
where\/
$\hat\eta_1,\ldots,\hat\eta_{m-n+1}$
are vector fields satisfying that\/
$\spann{\hat\eta_1,\ldots,\hat\eta_{m-n+1}}=\ker df$
at\/ $0$.
Here, the number of minus signs in\/ $q$ is equal to 
the number of negative eigenvalues of\/ $\hess_{\hat\eta} f_n$.
\end{corollary}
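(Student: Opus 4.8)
The plan is to specialize the general criterion for $k$-Morin singularities to $k=1$ and then to run the normalization computation already carried out in the proof of Lemma \ref{lem:s2}.

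First I would recall that the fold is by definition the $1$-Morin singularity, so the criterion established above (a germ is $\A$-equivalent to the $k$-Morin singularity if and only if $0$ is $k$-non-degenerate but not $(k+1)$-singular), taken at $k=1$, says that $f$ is a fold at $0$ if and only if $0$ is a non-degenerate singular point which is not $2$-singular, that is: $\rank df_0=n-1$ (granted by hypothesis), $0\in S(f)$ (automatic, since $\rank df_0=n-1<n$), $\rank d\Lambda_0=m-n+1$, and $\det\HH_\eta(0)\neq0$.

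Next I would pass to a normalized coordinate system $(x,y)$ on the source as in \eqref{eq:normalcoord}, $f(x,y)=(x,h(x,y))$; since $(df_n)_0=0$ is unaffected by a source change of coordinates and $f_n=h$, one automatically gets $dh_0=0$, and $\ker df_0=\spann{\partial y_1,\dots,\partial y_{m-n+1}}$, so $\hat\eta_j=\partial y_j$ is an admissible choice with $\hess_{\hat\eta}f_n=\big(h_{y_iy_j}(0)\big)$. By Lemma \ref{adaptedexist} we may take $\xi_i=\partial x_i$, $\eta_j=\partial y_j$ as the adapted collection; then $\lambda_j=h_{y_j}$, hence $\HH_\eta=\big(h_{y_iy_j}\big)$ and $\HH_\eta(0)=\hess_{\hat\eta}f_n$. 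By Lemma \ref{lem:rankhf} the rank of $\HH_\eta(0)$ is independent of the adapted collection, and the rank and the inertia of $\hess_{\hat\eta}f_n$ are independent of the choice of $\hat\eta$ since $\hess_{\hat\eta}f_n$ is the Gram matrix of the fixed quadratic form $\hess(f_n)_0|_{\ker df_0}$ in the basis $\hat\eta_1,\dots,\hat\eta_{m-n+1}$. Thus $\det\HH_\eta(0)\neq0\iff\rank\hess_{\hat\eta}f_n=m-n+1$. Finally, $d\Lambda_0$ is the $(m-n+1)\times m$ matrix $\big[(h_{y_ix_j}(0))\mid(h_{y_iy_j}(0))\big]$, which contains the square block $(h_{y_iy_j}(0))$; so once that block is invertible the non-degeneracy $\rank d\Lambda_0=m-n+1$ holds automatically, while conversely a fold is non-degenerate and satisfies $\det\HH_\eta(0)\neq0$. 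Combining these observations with the $k=1$ criterion gives ``$f$ is a fold at $0$'' $\iff$ $\rank\hess_{\hat\eta}f_n=m-n+1$.

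For the count of minus signs, once $\hess_yh(0)$ is non-degenerate I would apply the parametrized Morse lemma exactly as in the proof of the criterion above to obtain coordinates $\tilde y$ with $h=q(\tilde y)=\sum_{i=1}^{m-n+1}e_i\tilde y_i^2$, $e_i=\pm1$, which realizes $f$ in the normal form \eqref{eq:morinnor1}; by Sylvester's law of inertia the number of $i$ with $e_i=-1$ equals the negative index of $\hess_yh(0)=\hess_{\hat\eta}f_n$, and conversely in the normal form $\hess_{\hat\eta}f_n$ is congruent to $\hess q$. There is no genuine obstacle here: the only points needing a little care are the well-posedness of the single condition $\rank\hess_{\hat\eta}f_n=m-n+1$ (its independence of $\hat\eta$ and of the normalization) and the fact that this one condition already subsumes the non-degeneracy $\rank d\Lambda_0=m-n+1$, and both are elementary linear algebra once the identification $\HH_\eta(0)=\hess_{\hat\eta}f_n$ in normalized coordinates is in hand.
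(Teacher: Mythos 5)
Your proposal is correct and follows essentially the same route as the paper: both reduce the fold criterion to the single condition $H(0)\neq 0$ by observing that $d\Lambda_0$ contains the block $\HH_\eta$ (so non-degeneracy is automatic), and both identify $H(0)$ with $\det\hess_{\hat\eta}f_n(0)$ up to a nonzero factor using $(df_n)_0=0$, the paper doing this directly via the factor $\delta=\det(\xi_j f_i)_{i,j\le n-1}$ where you pass to normalized coordinates. The extra details you supply (the $k=1$ specialization of the main theorem, well-definedness of the Hessian on $\ker df_0$, and Sylvester's law for the sign count) are consistent with what the paper leaves implicit.
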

\begin{proof}
Let
$\xi_1,\ldots,\xi_{n-1},\eta_1,\ldots,\eta_{m-n+1}$ 
be an adapted collection of vector fields
with respect to $f$.
Then $f$ is a fold singularity at $0$ if and only if non-degeneracy holds
with $H(0)\ne0$.
Since $H(0)\ne0$ implies that $\rank \HH_\eta=m-n+1$,
and $d\Lambda_0$ contains $\HH_\eta$,
the non-degeneracy follows from $H(0)\ne0$.
Thus $f$ is the fold if and only if
$H(0)\ne0$.
On the other hand, by $(df_n)_0=0$, we see
$\eta_j\lambda_i(0)=\delta \eta_j\eta_i f_n(0)$,
where
$$
\delta=\det\pmt{
\xi_1f_1&\cdots&\xi_{n-1}f_1\\
\vdots&\vdots&\vdots\\
\xi_1f_{n-1}&\cdots&\xi_{n-1}f_{n-1}}.
$$
Thus 
$H=\det\hess_\eta f_n(0)$.
Furthermore,
since $\eta_1,\ldots,\eta_{m-n+1}$ satisfies
$\eta_i f_n=0$, we see that 
$\eta_i(0)=\hat\eta_i(0)$ implies
$\det\hess_{\hat\eta} f_n(0)=\det\hess_{\eta} f_n(0)$.
\end{proof}
\subsection{Criterion of the cusp}
For a function-germ $t:(\R^n,0)\to(\R,0)$ which has a critical point
at $0$ and a subspace $V\subset T_0\R^n$,
we consider the Hessian matrix $(v_jv_i t)_{(1\leq i,j\leq k)}$
with respect to a basis $v_1,\ldots,v_k$ of $V$,
which is defined by $(\tilde v_j\tilde v_i t)_{(1\leq i,j\leq k)}(0)$,
where $\tilde v_i$ is an extension of $v_i$. 
We remark that since $t$ has a critical point at $0$,
it does not depend on the choice of extensions.
Moreover, the
$\ker(v_jv_i t)_{(1\leq i,j\leq k)}$
depends only on $V$.
We denote it by $\ker \hess_{V} h(0)$.
\begin{corollary}\label{cor:cusp}
Let\/ $f=(f_1,\ldots,f_n):(\R^m,0)\to(\R^n,0)$ be a map-germ
satisfying\/
$(df_n)_0=0$ and\/ $\rank df_0=n-1$.
Then\/ $f$ is a cusp singularity at\/ $0$, if and only if
for a vector field\/ $\hat\theta$ satisfying 
$$\ker \hess_{\ker df_0} f_n(0)=\langle\hat\theta_0\rangle_{\R},$$
and contained in the\/ $\ker df$ on\/ $S(f)$,
it holds that
\begin{enumerate}
\item\label{item:c2cond3}
$\hat\theta^3 f_n(0)\ne0$,
\item\label{item:c2cond4}
$d(\hat\theta f_n)_0\ne 0$.
\end{enumerate}
Here, $\rank \hess_{\ker df_0} f_n(0)=m-n$ and
the number of negative eigenvalues is equal to
the number of minus signs in\/ $q$.
\end{corollary}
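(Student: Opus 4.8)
The plan is to derive the statement from Theorem~\ref{thm:main} with $k=2$, after translating its hypotheses into statements about $\hess_{\ker df_0}f_n(0)$ and $\hat\theta$. Observe first that the mere existence of a vector field $\hat\theta$ with $\ker\hess_{\ker df_0}f_n(0)=\langle\hat\theta_0\rangle_{\R}$ forces this Hessian to have corank one, i.e.\ $\rank\hess_{\ker df_0}f_n(0)=m-n$; moreover its signature agrees with that of the quadratic form $q$ in the Morin normal form~\eqref{eq:morinnor1} up to the single zero eigenvalue, so the number of minus signs in $q$ equals the number of negative eigenvalues of $\hess_{\ker df_0}f_n(0)$. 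This accounts for the final sentence of the statement. Since $(df_n)_0=0$ and $\rank df_0=n-1$, I take a normalizing coordinate system $f(x,y,z)=(x,h(x,y,z))$ as in~\eqref{eq:normalcoord} with $x=(x_1,\dots,x_{n-1})$, $y=(y_1,\dots,y_{m-n})$ and $z$ a single variable, chosen by a further linear change in the kernel variables so that $\partial z|_0$ spans $\ker\hess_{\ker df_0}f_n(0)$; then the $z$-row and $z$-column of the Hessian of $h$ in the $(y,z)$-directions at $0$ vanish.

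With the adapted collection $\xi_i=\partial x_i$, $\eta_j=\partial y_j$ $(j\le m-n)$, $\eta_{m-n+1}=\partial z$ one has $\lambda_j=h_{y_j}$, $\lambda_{m-n+1}=h_z$, and the $(y,z)$-block of $d\Lambda_0$ is $\hess_{\ker df_0}f_n(0)$ while its $z$-row reduces to $(h_{zx_1},\dots,h_{zx_{n-1}})(0)$. Since that block has corank one with kernel spanned by $\partial z$, and $\partial z|_0\in T_0S(f)=\ker d\Lambda_0$ (its $\partial z$-column is the vanishing $z$-row of the Hessian), non-degeneracy of $0$ is equivalent to $(h_{zx_1},\dots,h_{zx_{n-1}})(0)\ne0$, that is, to $d(h_z)_0\ne0$. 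As $\hat\theta f_n$ and $h_z$ have the same $1$-jet at $0$ for any admissible extension $\hat\theta$ (the correction terms lie in the maximal ideal times the differentials of $h$, hence vanish to second order), non-degeneracy of $0$ is equivalent to condition~\ref{item:c2cond4}. Assuming it, I apply the parametrized Morse lemma to split off a non-degenerate quadratic in $y$, reaching the form~\eqref{eq:normal2}, namely $f(x,y,z)=(x,q(y)+\tilde h(x,z))$ with $q(y)=\sum_{i=1}^{m-n}\pm y_i^2$, $\tilde h_z(0)=\tilde h_{zz}(0)=0$ and $\hat\theta=\partial z$. Keeping the same $\xi_i,\eta_j$, one computes $\Lambda=(\pm2y_1,\dots,\pm2y_{m-n},\tilde h_z(x,z))$, so the matrix $\HH_\eta$ of~\eqref{eq:hheta} is diagonal with entries $\pm2,\dots,\pm2,\tilde h_{zz}(x,z)$; hence $H=\det\HH_\eta=c\,\tilde h_{zz}(x,z)$ for a non-zero constant $c$, the vector field $\theta$ of Lemma~\ref{lem:thetaexist} may be taken to be $\partial z$, and $H'=c\,\tilde h_{zzz}(x,z)$.

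Theorem~\ref{thm:main} with $k=2$ now says that $f$ at $0$ is a cusp if and only if $0$ is non-degenerate, $H(0)=0$, $H'(0)\ne0$ and $dH_0|_{T_0S(f)}\ne0$. Here $H(0)=c\,\tilde h_{zz}(0)=0$ automatically, and $H'(0)=c\,\tilde h_{zzz}(0)$ is non-zero exactly when $\hat\theta^3 f_n(0)=\tilde h_{zzz}(0)\ne0$, i.e.\ condition~\ref{item:c2cond3}. Finally $dH_0|_{T_0S(f)}\ne0$ is not an additional requirement: since $\theta_0=\partial z|_0\in T_0S(f)$ and $dH_0(\theta_0)=H'(0)$, condition~\ref{item:c2cond3} already forces it. Combining with the previous paragraph, $f$ at $0$ is a cusp precisely when conditions~\ref{item:c2cond3} and~\ref{item:c2cond4} hold, under the standing hypothesis that $\hat\theta$ exists (equivalently $\rank\hess_{\ker df_0}f_n(0)=m-n$), which is the assertion.

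I expect the only point requiring genuine care—rather than a real obstacle—to be the verification that conditions~\ref{item:c2cond3} and~\ref{item:c2cond4} do not depend on the chosen extension of $\hat\theta$, in particular that $\hat\theta^3 f_n(0)=\tilde h_{zzz}(0)$ for every admissible extension. This follows from $\hat\theta f_n\equiv0$ on $S(f)$ together with $\hat\theta_0\in T_0S(f)$: the first gives $\hat\theta f_n(0)=0$ and, via $\hat\theta_0\in T_0S(f)$, $\hat\theta^2 f_n(0)=0$, and then a short computation modulo the ideal of $S(f)$—generated by $y_1,\dots,y_{m-n}$ and $\tilde h_z$ once non-degeneracy is known—shows that the remaining correction terms in $\hat\theta^3 f_n(0)$ vanish. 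The rest is the normal-form bookkeeping already carried out in the proof of Lemma~\ref{lem:s2} and in the criterion for the fold.
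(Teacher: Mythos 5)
Your proposal is correct and follows essentially the same route as the paper: both reduce to Theorem~\ref{thm:main} with $k=2$, identify $H(0)$ and $H'(0)$ (up to non-zero factors) with $\hat\theta^2 f_n(0)$ and $\hat\theta^3 f_n(0)$, show that non-degeneracy is equivalent to $d(\hat\theta f_n)_0\ne 0$, and observe that $\rank dH_0|_{T_0S(f)}=1$ comes for free from $H'(0)\ne0$ because $\theta_0\in T_0S(f)$. The one step you only sketch --- that $\hat\theta^2 f_n(0)$ and $\hat\theta^3 f_n(0)$ do not depend on the admissible choice and extension of $\hat\theta$ --- is exactly the part the paper works out explicitly, by writing $\hat\theta=\sum_i\alpha_i\eta_i+\beta\theta$ with $\alpha_i(0)=0$, $\beta(0)\ne0$ and tracking which correction terms vanish to the required order; your outline of that computation is consistent with the paper's.
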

\begin{proof}
The necessity is obvious, we show
the sufficiency.
By Theorem \ref{thm:main}, we show non-degeneracy,
$2$-non-degeneracy and non-$3$-singularity.
Namely,
we show that the conditions (1) and (2) imply non-degeneracy, 
and $H(0)=0$, $H'(0)\ne0$ and $\rank dH_0|_{T_0S(f)}=1$.
If $\theta$ is a generator of the kernel of $\HH_f$,
then since $0$ is $2$-singular i.e., $\theta_0\in T_0S(f)$,
$\rank dH_0|_{T_0S(f)}=1$ follows by $H'(0)\ne0$.
Thus it is enough to show non-degeneracy, $H(0)=0$ and $H'(0)\ne0$.

Before showing that, we give some calculations.
Since the conditions do not depend on the choice of
$\eta$,
we take an adapted collection 
$(\xi_1,\ldots,\xi_{n-1},\eta_1,\ldots,\eta_{m-n},\theta)$
of vector fields with respect to $f$.
Since $\theta$ belongs to the kernel of $\HH_\eta$
on $S_2(f)$, and $0\in S_2(f)$, so it holds that
$$
\theta\lambda_i=0\quad (i=1,\ldots,m-n+1)
\quad \text{at}\quad 0.
$$
On the other hand, $\eta_i f=0$ $(i=1,\ldots,m-n)$
and $\theta f=0$ hold
on $S(f)$, so it holds that
$$
\theta\eta_i f=0\quad(i=1,\ldots,m-n)\quad\text{and}\quad
\theta^2f=0 \quad\text{at}\quad0.
$$
Moreover, by $(df_n)_0=0$ and $[\theta,\eta_i]\in T\R^m$,
it holds that $\eta_i\theta f_n=0$ at $0$.
Thus the bottom column of
$$
\eta_i\lambda_{m-n+1}(0)
=
\det(\xi_1f,\ldots,\xi_{n-1}f,\eta_i\theta f)(0)
$$
is $0$, and we see that
$\eta_i\lambda_{m-n+1}(0)=0$ $(i=1,\ldots,m-n)$.
We remark that the kernel of $\hess_\eta f_n(0)$ is $\theta_0$.

We translate the conditions $H=0$ and $H'\ne0$
using $f_n$.
By the above calculation, it follows that
$$
\begin{array}{rcl}
\theta H(0)
&=&
\theta \det
\left(
\begin{array}{ccc|c}
\eta_1\lambda_1&\cdots &\eta_{m-n}\lambda_1&\theta\lambda_1\\
\vdots&\vdots&\vdots&\vdots\\
\eta_1\lambda_{m-n}&\cdots &\eta_{m-n}\lambda_{m-n}&\theta\lambda_{m-n}\\
\hline
\eta_1\lambda_{m-n+1}&\cdots &\eta_{m-n}\lambda_{m-n+1}&\theta\lambda_{m-n+1}
\end{array}\right)(0)\\
&=&
\det
\left(
\begin{array}{ccc|c}
\eta_1\lambda_1&\cdots &\eta_{m-n}\lambda_1&\theta^2\lambda_1\\
\vdots&\vdots&\vdots&\vdots\\
\eta_1\lambda_{m-n}&\cdots &\eta_{m-n}\lambda_{m-n}&\theta^2\lambda_{m-n}\\
\hline
\eta_1\lambda_{m-n+1}&\cdots &\eta_{m-n}\lambda_{m-n+1}&\theta^2\lambda_{m-n+1}
\end{array}\right)(0)\\
&\sim&
\theta^2\lambda_{m-n+1}(0)\\
&=&
\theta^2\det(\xi_1f,\ldots,\xi_{n-1}f,\theta f)(0)\\
&=&
\det(\xi_1f,\ldots,\xi_{n-1}f,\theta^3 f)(0)\\
&\sim&
\theta^3 f_n(0).
\end{array}
$$
By the same calculation, $H(0)=\theta^2 f_n(0)$ also holds.
Thus it is necessary to show that
$\theta^2 f_n(0)\sim\hat\theta^2 f_n(0)$ and
$\theta^3 f_n(0)\sim\hat\theta^3 f_n(0)$.
We set
$$
\hat\theta=
\sum_{i=1}^{m-n}\alpha_i\eta_i+\beta\theta,
\quad\alpha_1(0)=\cdots=\alpha_{m-n}(0)=0,\ \beta(0)\ne0.
$$
Then we have
$$
\begin{array}{rcl}
\displaystyle
\hat\theta f_n
&=&
\displaystyle
\sum_{i=1}^{m-n}\alpha_i\eta_if_n+\beta\theta f_n,\\
\hat\theta^2 f_n
&=&
\displaystyle
\sum_{i,j=1}^{m-n}
\underline{\alpha_j}
(\eta_j\alpha_i\underline{\eta_if_n}+\underline{\alpha_i}\eta_j\eta_if_n)
+
\sum_{j=1}^{m-n}
\underline{\alpha_j}(\eta_j\beta\underline{\theta f_n}
+\beta\underline{\eta_j\theta f_n})\\
&&\hspace{5mm}
\displaystyle
+\beta\Big(
\sum_{i=1}^{m-n}(\theta\alpha_i\underline{\underline{\eta_if_n}}
+
\underline{\alpha_i}\underline{\theta\eta_i f_n})
+
\theta\beta\underline{\underline{\theta f}}+\beta \underline{\theta^2 f_n}
\Big).
\end{array}
$$
Here, 
we underline the terms that vanish at the origin,
and we put double underlines under the terms that 
vanish at the origin
and whose differentiation along $\theta$ vanishes at the origin.
Thus we have
$$
\hat\theta^3 f_n(0)=\theta\hat\theta^2 f_n(0)
\sim
\theta^3 f_n(0).
$$
By the same reason, we have
$
\hat\theta^2 f_n(0)\sim
\theta^2 f_n(0)
$.

Now we show the non-degeneracy condition.
We have
$$
\begin{array}{rcl}
\displaystyle
d\Lambda_0
&=&
\left(
\begin{array}{c|c|c}
\Big(\xi_j\lambda_i\Big)_{\substack{i=1,\ldots,m-n,\\j=1,\ldots,n-1}}
&
\Big(\eta_j\lambda_i\Big)_{i,j=1,\ldots,m-n}
&
\begin{array}{c}
\lambda_1'\\
\vdots\\
\lambda_{m-n}'
\end{array}\\
\hline
\lefteqn{\phantom{\displaystyle\frac{\frac{1}{1}}{\frac{1}{1}}}}
\Big(\xi_j\lambda_{m-n+1}\Big)_{j=1,\ldots,n-1}
&
\Big(\eta_j\lambda_{m-n+1}\Big)_{j=1,\ldots,m-n}
&
\lambda'_{m-n+1}
\end{array}
\right)(0)\\[10mm]
&=&
\left(
\begin{array}{ccc|ccc|c}
*&\cdots&*
&
&A&&0\\
\hline
\xi_1\lambda_{m-n+1}&\cdots&\xi_{n-1}\lambda_{m-n+1}
&
0&\cdots&0
&
0
\end{array}
\right)(0).
\end{array}
$$
By $2$-non-degeneracy, $A$ is regular, and
the non-degeneracy is equivalent to
$$
(\xi_1\lambda_{m-n+1},\ldots,\xi_{n-1}\lambda_{m-n+1})(0)
\ne(0,\ldots,0).$$
Moreover, by
$$
\xi_i\lambda_{m-n+1}(0)
=
\det(\xi_1f,\ldots,\xi_{n-1}f,\xi_i\theta f)(0)
\sim \xi_i\theta f_n(0),
$$
$\eta_i\theta f_n(0)=0$ $(i=1,\ldots,m-n)$ and 
$\theta^2 f_n(0)=0$,
the condition $(2)$ is equivalent to the non-degeneracy.
\end{proof}
It should be remarked that by Corollary \ref{cor:cusp},
one can easily see that 
$f:(\R^m,0)\to(\R^2,0)$ is a
cusp singularity at $0$ 
if and only if $0$ is non-degenerate and $f|_{S(f)}$
is $\A$-equivalent to $t\mapsto(t^2,t^3)$.
This criteria was also obtained in \cite{nab}.

\section{First degree bifurcation of Lefschetz singularity}
The Lefschetz singularity is a map-germ 
$(\R^4,0)\to(\R^2,0)$
defined by
$$
L
(x_1,x_2,y_1,y_2)
=
(x_1x_2-y_1y_2, x_1y_2+x_2y_1).
$$
This is obtained by considering a map-germ
$\R^4=\C^2\ni(z,w)\mapsto zw\in\C=\R^2$.
From the view point
of low-dimensional topology
there are many studies of bundles on surfaces
with this kind of singular points called the Lefschetz fibrations
 (See \cite{don,gs}, for example.).
The Lefschetz singularity is not a stable germ,
and it is natural to consider stable perturbations
of it. The wrinkling 
$$
L_w
(x_1,x_2,y_1,y_2,s)
=
(x_1x_2-y_1y_2+s(x_1+x_2), x_1y_2+x_2y_1)
$$
due to Lekili \cite{l} is such a move and has been well studied. 
The Lefschetz singularity is not finitely $\A$-determined,
and one cannot obtain a kind of bifurcation diagram.
Let us consider
$$
\begin{array}{l}
\tilde L_n
(x_1,x_2,y_1,y_2,
a_{1},a_{2},
b_{1},b_{2},
c_{2000},\ldots,
d_{2000},\ldots)\\
\displaystyle
\hspace{10mm}=
(x_1x_2-y_1y_2+a_1x_1+a_2x_2,\ x_1y_2+x_2y_1+b_1x_1+b_2x_2)\\
\displaystyle
\hspace{30mm}
+
\Bigg(
\sum_{i+j+k+l=2}^n c_{ijkl}x_1^ix_2^jy_1^ky_2^l,\ 
\sum_{i+j+k+l=2}^n d_{ijkl}x_1^ix_2^jy_1^ky_2^l\Bigg).
\end{array}
$$
Then it holds that
$$
\displaystyle\Bigg(tL(\oplus^4\E_4)+
\sum_{k=1}^4 \R\dfrac{\partial \tilde L_n}{\partial a_k}\Bigg\vert_{p}
+
\R\dfrac{\partial \tilde L_n}{\partial c_{2000}}\Bigg\vert_{p}
+\cdots
+
\R\dfrac{\partial \tilde L_n}{\partial d_{2000}}\Bigg\vert_{p}
+\cdots\Bigg)
+\oplus^2\M_4^{n+1}
=
\oplus^2\E_4,
$$
where $a_3=b_1,a_4=b_2$ and $p=(x_1,x_2,y_1,y_2,0,\ldots,0)$.
Here, 
$\E_4$ is the set of function-germs $(\R^4,0)\to\R$
and $\M_4$ is its unique maximal ideal, and
$tL:\oplus^4\E_4\to\oplus^2\E_4$ stands for the tangential map:
$$
tL
(v_1,v_2,v_3,v_4)
=
\pmt{
a_1+x_2&a_2+x_1&-y_2&-y_1\\
b_1+y_2&b_2+y_1&x_2 &x_1}
\pmt{v_1\\v_2\\v_3\\v_4}.
$$
Thus we would like to say that $\tilde L_n$ is a ``versal-like''
unfolding of $L$ up to $n$-degrees.
See \cite{m} for the definition of the versal unfolding.

In \cite{iikt}, deformations of Brieskorn polynomials
which include the Lefschetz singularity
is considered, and an evaluation of the number of cusp 
appearing on it is obtained.
Here, we would consider the set
$$
\begin{array}{l}
N=\Big\{C=(a_1,a_2,b_1,b_2)\,\Big|\,
\text{there exists }q=(x_1,x_2,y_1,y_2)\in S(\tilde L_1)\text{ such that }\\
\hspace{63mm}
\tilde L_1\text{ at }q\text{ is not the fold nor the cusp.}\Big\}.
\end{array}
$$
We call $N$ the {\em non-cusp locus}.
Although the bifurcation diagram for $L$ cannot be
drawn in any finite dimensional space,
we can draw $N|_{b_2=\ep}\subset\R^3$ for small $\ep$, 
and the author believes that
we might regard $N$ as a $1$ degree bifurcation diagram
of $L$.
\smallskip

We set $\tilde L(x_1,x_2,y_1,y_2)
=
\tilde L_1(x_1,x_2,y_1,y_2,a_{1},a_{2},b_{1},b_{2})$,
regarding $a_{1},a_{2},b_{1},b_{2}$ as constants.
To detect $N$, we consider the following three conditions
for $q=(x_1,x_2,y_1,y_2)$:
\begin{enumerate}
\renewcommand{\labelenumi}{(\roman{enumi})}
\item\label{itm:51} $\rank d\tilde L_q=0$,
\item\label{itm:52} $\rank d\tilde L_q=1$ and $\rank\HH_\eta=0$, 
\item\label{itm:53} $\rank d\tilde L_q=1$, $\rank\HH_\eta(q)=1$ and
$H(q)=\theta H(q)=0$.
\end{enumerate}
See Theorem \ref{thm:main} for the notations.
\smallskip

Let $C=(a_1,a_2,b_1,b_2)\in N$ and $q=(x_1,x_2,y_1,y_2)\in S(\tilde L)$
satisfies the condition (iii).
We assume that $a_1+x_2\ne0$, then
$\eta_1=(a_2+x_1)\partial x_1+(a_1+x_2)\partial x_2, 
\eta_2=y_2\partial x_1+(a_1+x_2)\partial y_1,
\eta_3=y_1\partial x_1+(a_1+x_2)\partial y_2$
form a basis of the kernel of $df$ at $p=(x_1,x_2,y_1,y_2)\in S(\tilde L)$.
Moreover, $\partial x_1$ together with $\eta_1,\eta_2,\eta_3$
forms a basis of $T_p\R^4$.
Then we define
$\lambda_i=\det(\tilde L_{x_1},\eta_i\tilde L)$,
$
\HH_{(\eta_1,\eta_2,\eta_3)}=(\eta_i\lambda_j)_{i,j=1,2,3}
$ and
$H=\det\HH_{(\eta_1,\eta_2,\eta_3)}$.
Then we see
\begin{align*}
\lambda_1&=-x_1 y_2+x_2 y_1
           -b_1 x_1+b_2 x_2
           +a_1 y_1-a_2 y_2
           +a_1 b_2-a_2 b_1,\\
\lambda_2&=x_2^2+y_2^2+a_1 x_2+ b_1y_2,\\
\lambda_3&=x_1 x_2+y_1y_2+a_1 x_1+b_1y_1.
\end{align*}
If $(b_1+2y_2,(a_1+2x_2)y_1)\ne(0,0)$, then we set
$$
\theta
=
\det
\pmt{
\eta_2\lambda_1&\eta_2\lambda_2\\
\eta_3\lambda_1&\eta_3\lambda_2}
\eta_1
+
\det
\pmt{
\eta_3\lambda_1&\eta_3\lambda_2\\
\eta_1\lambda_1&\eta_1\lambda_2}
\eta_2
+
\det
\pmt{
\eta_1\lambda_1&\eta_1\lambda_2\\
\eta_2\lambda_1&\eta_2\lambda_2}
\eta_3.
$$
Then $\theta$ forms a basis of $\ker\HH$ on $S_2(\tilde L)$.
We assume $(x_1,y_1)\ne(0,0)$.
If $y_1=0$, then $x_2=-a_1$ since $\lambda_3=0$.
Thus we may assume $y_1\ne0$.
Moreover, we may assume $b_1+y_2\ne0$, because
if $b_1+y_2=0$ and $x_1\ne0$, then we see $a_1+x_2=0$ 
by $\lambda_3=0$.
If $b_1+y_2=0$ and $x_1=0$, then we see $x_2=0$ 
by $\lambda_2=0$. Then we see $a_1\ne0$.
Then we have $b_2+y_1=0$ since $\lambda_1=0$.
In this case, $H=0$ and $\theta H=0$ can be calculated as
$$
y_1=-(a_2/a_1)y_2,\quad
a_1^2a_2y_2(a_1^2+y_2^2)=0.
$$
Thus we have $y_1=0$ which is a contradiction.
By the above discussion, if $(x_1,y_1)\ne(0,0)$,
then
$\lambda_1=\lambda_2=\lambda_3=0$ can be modified to
\begin{equation}\label{eq:lef10}
x_2=-\dfrac{x_1(a_1x_1+b_1y_1)}{x_1^2+y_1^2},\ 
y_2=-\dfrac{y_1(a_1x_1+b_1y_1)}{x_1^2+y_1^2},\ 
a_2x_1+y_1b_2+x_1^2+y_1^2=0.
\end{equation}
Substituting \eqref{eq:lef10} into 
$H=0$, we have
$$
(b_1x_1-a_1y_1)
\big(
a_1y_1(-3x_1^2+y_1^2-2a_2x_1)+
b_1(x_1^3-3x_1y_1^2
+a_2x_1^2-a_2y_1^2)
\big)
=0.$$

If we assume $b_1x_1-a_1y_1=0$, then we obtain 
$a_1+x_2=0$ by \eqref{eq:lef10}.
Thus we may assume $b_1x_1-a_1y_1\ne0$.
Moreover, if we assume
$\big(-3x_1^2+y_1^2-2a_2x_1,x_1^3-3x_1y_1^2+a_2x_1^2-a_2y_1^2\big)
=(0,0)$, then we obtain $x_1=y_1=0$.
Thus we may assume 
$\big(-3x_1^2+y_1^2-2a_2x_1,x_1^3-3x_1y_1^2+a_2x_1^2-a_2y_1^2\big)\ne(0,0)$.
Then we see that $\theta H=0$ on $H=0$ is equivalent to
$(a_2+x_1)(a_2+2x_1)=0$.
Hence we have a part of $N$:
\begin{equation*}\label{eq:lef15}
N_1=\{(a_1,a_2,b_1,b_2)\,|\,
a_1(a_2^2-b_2^2)-2a_2b_1b_2=0\ \text{or}\ 
a_2(a_1^2+b_1^2)-2b_2(a_1+b_1)=0\}.
\end{equation*}
If $x_1=y_1=0$, then $\lambda_3=0$ holds,
and $\lambda_1=\lambda_2=0$ can be modified to
\begin{equation}\label{eq:lef20}
a_2 (b_1+y_2)-b_2 (a_1+x_2)=0,\quad
x_2(a_1+x_2)+y_2 (b_1+y_2)=0.
\end{equation}

If $(a_2,b_2)\ne(0,0)$, since $a_1 + x_2\ne0$, we obtain
$$
x_2=\dfrac{b_2 (a_2 b_1 - a_1 b_2)}{a_2^2 + b_2^2}, 
\qquad
y_2=\dfrac{a_2 (-a_2 b_1 + a_1 b_2)}{a_2^2 + b_2^2}.
$$
Since $a_1 + x_2\ne0$ and $b_1+2y_2\ne0$,
we obtain
a part of $N$:
\begin{equation*}\label{eq:lef25}
N_2=\{(a_1,a_2,b_1,b_2)\,|\,
a_1a_2+b_1b_2=0\}.
\end{equation*}
%
Next we assume $(b_1+2y_2,(a_1+2x_2)y_1)=(0,0)$.
If $(b_1+2y_2,a_1+2x_2)=(0,0)$, then we have $x_2=y_2=0$ by $\lambda_2=0$.
Thus we see $a_1=b_1=0$. 
In this case, $C=(0,0,a_2,b_2)\in N_2$.
We assume $(b_1+2y_2,y_1)=(0,0)$, and $a_1+2x_2\ne0$.
Then we have $x_1=0$ by $\lambda_3=0$, and
$a_1 x_2+x_2^2-y_2^2=0$ by $\lambda_2=0$.
Then $\theta=a_2\eta_2+(a_1+x_2)\eta_3$ is a generator of kernel of 
$\HH_{(\eta_1,\eta_2,\eta_3)}$.
Then
$\theta H$ is a non-zero multiplication of
$a_2(3 a_1^2+7 a_1 x_2+4 x_2^2+2 y_2^2)$.
Substituting $a_1 x_2+x_2^2-y_2^2=0$ into this formula,
we have
$3a_2(a_1+x_2) (a_1+2 x_2)=0$,
which implies $a_2=0$.
Then we have $b_2=0$ by $\lambda_1=0$.
In this case, $C\in N_2$.
On the other hand, we also see that
if $C=(a_1,a_2,b_1,b_2)\in N$ and $q=(x_1,x_2,y_1,y_2)\in S(\tilde L)$
satisfies the condition (ii),
then $C\in N_1\cup N_2$.
Summarizing the above arguments,
if $a_1+x_2\ne0$, then we have a part of the non-cusp locus
$N_1\cup N_2$.
By symmetry, we may interchange the subscript $1$ with $2$.
Thus we obtain another part of $N$
in the case of $a_2+x_1\ne0$:
\begin{equation*}\label{eq:lef40}
N_3=\{(a_1,a_2,b_1,b_2)\,|\,
a_2(a_1^2-b_1^2)-2a_1b_1b_2=0\ \text{or}\ 
a_1(a_2^2+b_2^2)-2b_1(a_2+b_2)=0\}.
\end{equation*}

Next, we assume $a_1+x_2=a_2+x_1=0$
and $(b_1+y_2,b_2+y_1)\ne(0,0)$.
Then by the same method, 
we see $C\in N_1\cup N_2\cup N_3$.
Also if
$a_1+x_2=a_2+x_1=b_1+y_2=b_2+y_1=0$,
then we see $C\in N_1\cup N_2\cup N_3$.
On the other hand, 
if $C\in N$ and $q\in S(\tilde L)$ satisfies 
the condition (i),
then we also see $C\in N_1\cup N_2\cup N_3$.
Summarizing all these arguments,
we have
\begin{align*}
N=&\lefteqn{N_1\cup N_2\cup N_3}\\
 =&
\{C=(a_1,a_2,b_1,b_2)\,|\,
a_1(a_2^2-b_2^2)-2a_2b_1b_2=0\}\\
 &\hspace{3mm}\cup
\{C\,|\,
a_2(a_1^2+b_1^2)-2b_2(a_1+b_1)=0\}
\cup\{C\,|\,
a_1a_2+b_1b_2=0\}\\
 &\hspace{3mm}\cup
\{C\,|\,
a_2(a_1^2-b_1^2)-2a_1b_1b_2=0\}
\cup
\{C\,|\,
a_1(a_2^2+b_2^2)-2b_1(a_2+b_2)=0\}.
\end{align*}
We draw the pictures of $N|_{b_2=-\ep}$,
$N|_{b_2=0}$ and
$N|_{b_2=\ep}$
in the $(a_1,a_2,b_1)$-space 
for small $\ep$ in Figure \ref{fig:bif}.
Here, the thick line in $N|_{b_2=0}$ stands for
the wrinkling.
\begin{figure}
\centering
\toukouchange{
  \includegraphics[width=.18\linewidth]{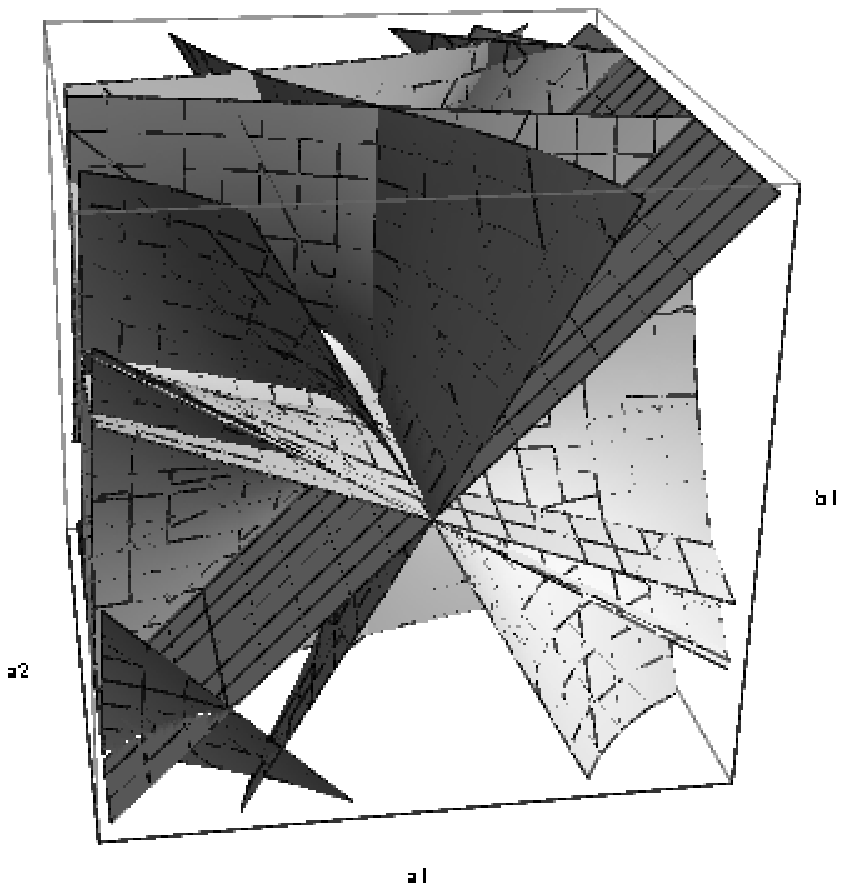}
 \hspace{1mm}
  \includegraphics[width=.18\linewidth]{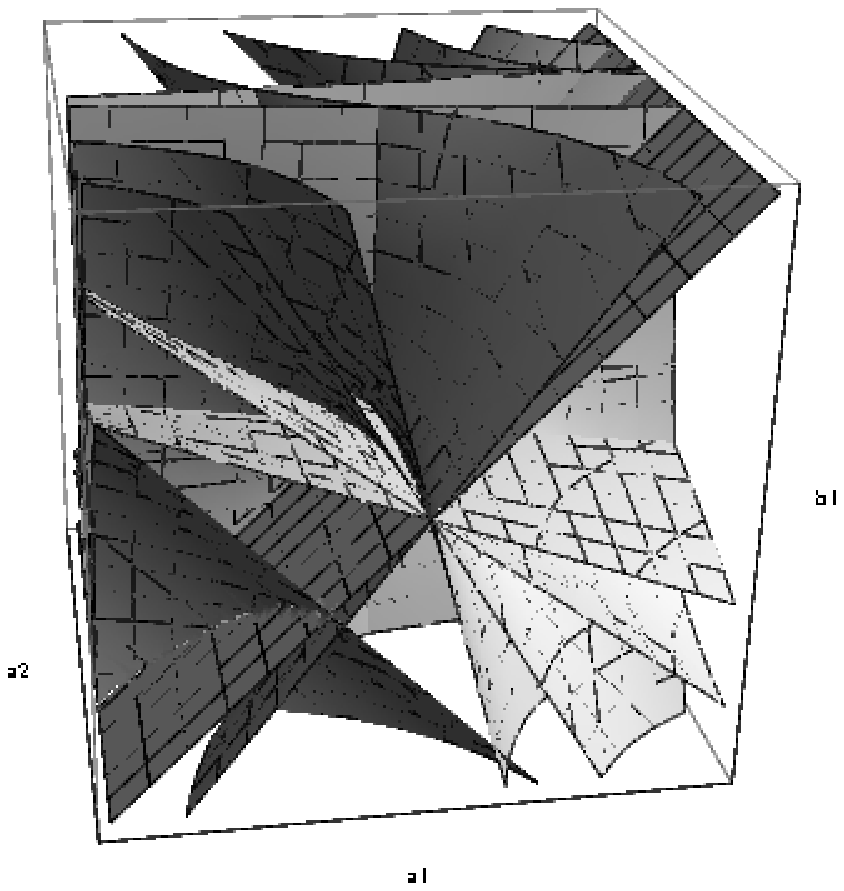}
 \hspace{1mm}
  \includegraphics[width=.18\linewidth]{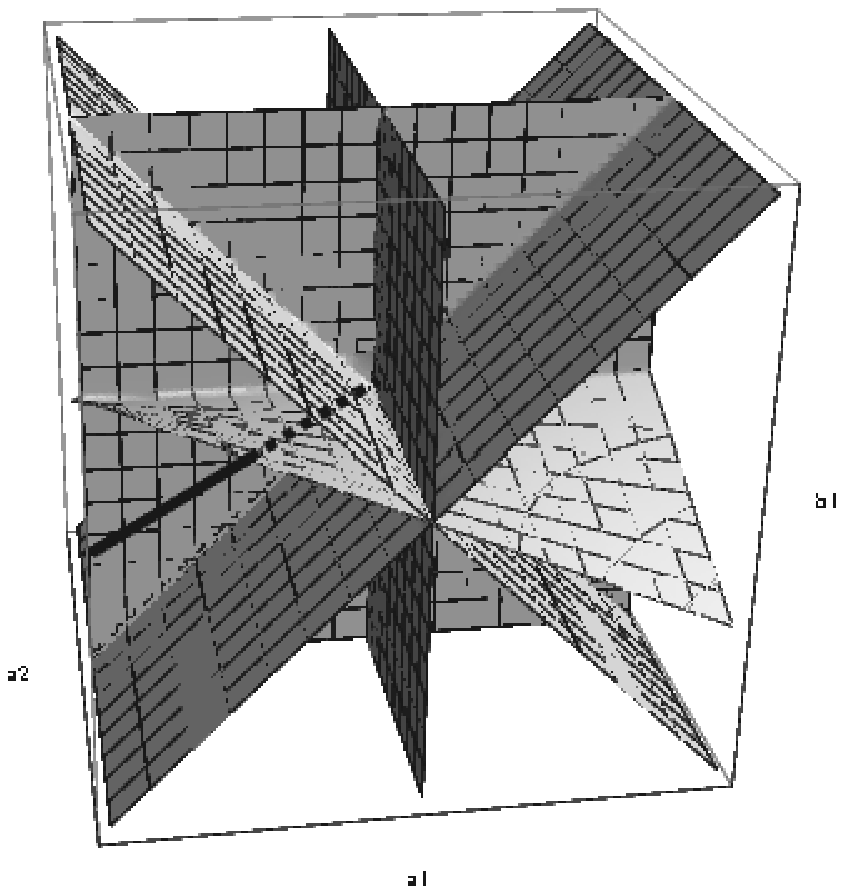}
 \hspace{1mm}
  \includegraphics[width=.18\linewidth]{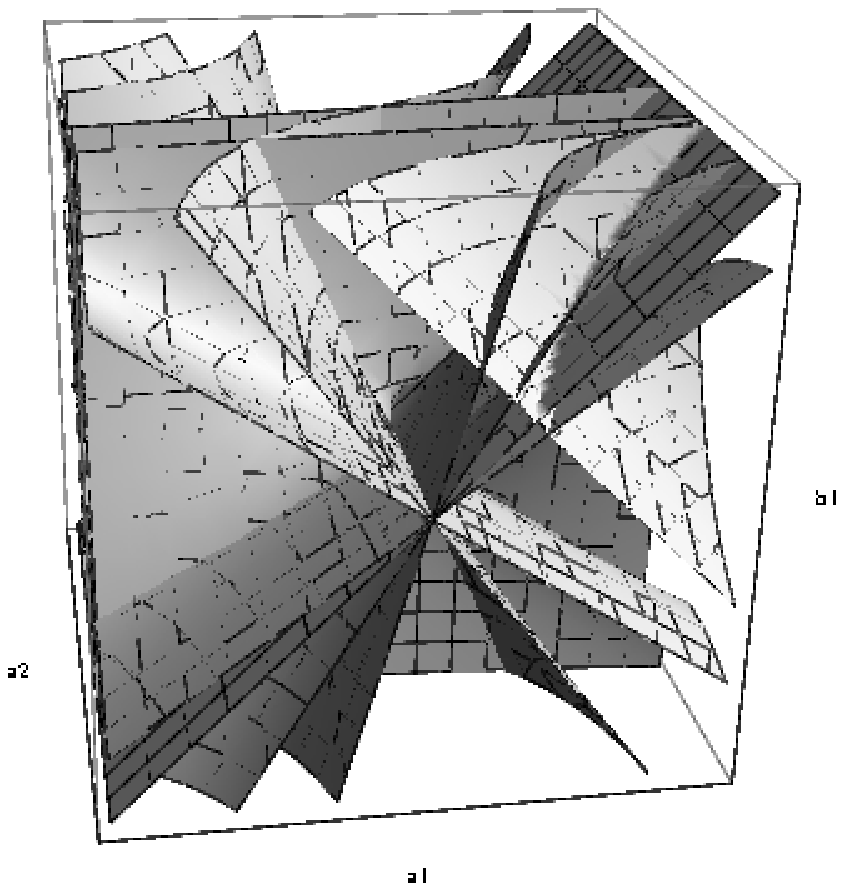}
 \hspace{1mm}
  \includegraphics[width=.18\linewidth]{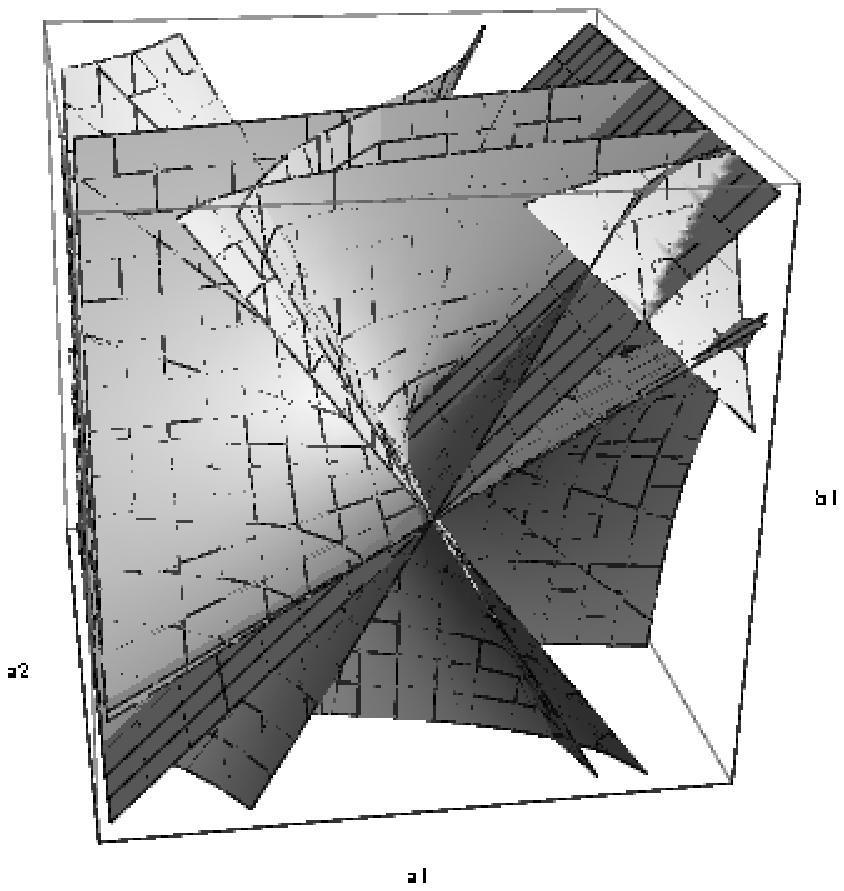}
}
{  \includegraphics[width=.4\linewidth]{figwrin01.eps}
 \hspace{5mm}
  \includegraphics[width=.4\linewidth]{figwrin02.eps}\\
  \includegraphics[width=.4\linewidth]{figwrin03.eps}\\
  \includegraphics[width=.4\linewidth]{figwrin04.eps}
 \hspace{5mm}
  \includegraphics[width=.4\linewidth]{figwrin05.eps}
}
\caption{Non-cusp locus. \toukouchange{}{From upper left to lower right,}
$N|_{b_2=-1/2},\,
N|_{b_2=-1/4},\,
N|_{b_2=0},\,
N|_{b_2=1/4},\,
N|_{b_2=1/2}$}
\label{fig:bif}
\end{figure}

\begin{acknowledgements}
The author thanks
Mar\'{i}a del Carmen Romero Fuster
for fruitful advice and constant encouragement.
He also thanks Kenta Hayano, Toru Ohmoto 
and Kazuto Takao for valuable comments and
encouragements.
\toukouchange{}{
The author is grateful to
the referee for
careful reading and valuable comments.}
\end{acknowledgements}

\medskip
\toukoudel{
{\small 
\begin{flushright}
\begin{tabular}{l}
Department of Mathematics,\\
Graduate School of Science, \\
Kobe University, \\
Rokko, Nada, Kobe 657-8501, Japan\\
  E-mail: {\tt sajiO\!\!\!amath.kobe-u.ac.jp}
\end{tabular}
\end{flushright}
}
}
\end{document}